\def\ds{\displaystyle}
\def\be{{\bf e}}
\def\ov{\overline}
\def\byt{\begin{ytableau}}
\def\eyt{\end{ytableau}}
\def\lb{\left(}
\def\rb{\right)}
\newtheorem{thm}{Theorem}[section]
\newtheorem{lem}{Lemma}[section]
\newtheorem{cor}{Corollary}[section]
\newtheorem{exa}{Example}[section]
\newtheorem{rem}{Remark}[section]
\newdimen\Squaresize \Squaresize=14pt
\newdimen\Thickness \Thickness=0.4pt
\def\scc{\scriptstyle}
\def\Square#1{\hbox{\vrule width \Thickness
   \vbox to \Squaresize{\hrule height \Thickness\vss
      \hbox to \Squaresize{\hss#1\hss}
   \vss\hrule height\Thickness}
\unskip\vrule width \Thickness} \kern-\Thickness}
\def\Vsquare#1{\vbox{\Square{$#1$}}\kern-\Thickness}
\def\young#1{
\vbox{\smallskip\offinterlineskip \halign{&\Vsquare{##}\cr #1}}}
\def\vy#1{\hskip2pt\vcenter{\young{#1}}\hskip2pt}
\newcommand{\shiftnoarrow}[2]{\ensuremath \raisebox{#1cm}{${#2}$}}
\title[A cyclic sieving phenomenon for symplectic tableaux]%
{A cyclic sieving phenomenon for symplectic tableaux}
\author{Graeme Henrickson, Anna Stokke and
Max Wiebe}
\address{University of Winnipeg \\
Department of Mathematics and Statistics \\
Winnipeg, Manitoba \\
Canada  R3B 2E9}
\email{\tt a.stokke@uwinnipeg.ca}
\thanks{This research was supported by NSERC grant RGPIN-2018-05877.}
\begin{document}

\begin{abstract}
We give a  cyclic sieving phenomenon for symplectic $\lambda$-tableaux $SP(\lambda,2m)$, where $\lambda$ is a partition of an odd integer $n$ and $gcd(m,p)=1$ for any odd prime $p\leq n$.  We use the crystal structure on  Kashiwara-Nakashima symplectic tableaux to get a cyclic sieving action as the product $\sigma$ of simple reflections in the Weyl group.  The cyclic sieving polynomial is the $q$-anologue of the hook-content formula for symplectic tableaux.  More generally, we give a CSP for symplectic skew tableaux with analogous conditions on the shape and a cyclic group action that rotates tableaux weights in a way motivated by the $\sigma$-action.
\end{abstract}

\keywords{Cyclic sieving phenomenon; Crystal graphs; Young tableau; Symplectic tableaux}

\maketitle

\section{Introduction}

The {\em cyclic sieving phenomenon} (CSP) was introduced by Reiner, Stanton, and White in \cite{rsw}.  Let $X$ be a finite set, $\langle g \rangle$ a cyclic group of order $n$ that acts on $X$ and $f(q) \in \mathbb{Z}[q]$.    The triple $(X, \langle g \rangle,f(q))$ exhibits the cyclic sieving phenomenon if, for $\omega$ a primitive $n$th root of unity, $$\vert \{ x \in X \mid g^d\cdot x=x \} \vert =f(\omega^d),$$ for all $d\geq 0$. Since then, CSPs have been widely studied in various settings. For a 2011 survey see \cite{sagan}.

Numerous researchers have investigated cyclic sieving phenomena for tableaux (see, for instance, \cite{alexander, linusson,basman, bms, fontaine, gaetz, ohpark1, ohpark2, pechenik, ponwang, psv, rhoades, westbury1}).  Using the cyclic action given by Sch\"utzenberger's promotion  operator $\partial$ \cite{schutz1, schutz2} on rectangular semistandard tableaux $SSYT(\lambda,m)$ with entries in $\{1,2,\ldots,m\}$, Rhoades proved that the triple
$$(SSYT(\lambda,m), \langle \partial \rangle, q^{-\kappa(\lambda)}s_\lambda(1,q,\ldots,q^{m-1}))$$
exhibits the CSP.  Here $s_\lambda(1,q,\ldots,q^{m-1})$ is a principal specialization of the Schur polynomial and $\kappa(\lambda)=\sum_{i}(i-1)\lambda_i$.
The above result was also proved in \cite{westbury1} using crystal base theory.  As well, CSPs have been given for hook shapes  \cite{bms} and for stretched hook shapes  \cite{linusson}.    In \cite{ohpark2}, the authors show that a CSP can be found for Schur polynomials and more general shapes, but the group action is unknown.

Given a finite-dimensional simple complex Lie algebra $\mathfrak{g}$ with irreducible highest weight $U_q(\mathfrak{g})$-module $V_q(\lambda)$, the crystal base $\mathcal{B}(\lambda)$ reflects the structure of the $U_q(\mathfrak{g})$-module $V_q(\lambda)$ in a combinatorial way and so reveals information about the structure of the irreducible highest weight $\mathfrak{g}$-module $V(\lambda)$.  Kashiwara and Nakashima gave Young tableaux realizations of crystal bases $\mathcal{B}(\lambda)$ for classical simple Lie algebra types in \cite{kashnak}.  Essentially this is a description of each $\mathcal{B}(\lambda)$ as a set of tableaux of shape $\lambda$ satisfying certain conditions. For Cartan type $A_{m-1}$, these are the usual semistandard tableaux, but for other Cartan types the tableaux descriptions are  more complicated.

Oh and Park \cite{ohpark1} employed the cyclic action $\tt{c}$ arising from the $U_q(\mathfrak{sl}_m)$-crystal structure for semistandard tableaux to prove that  $$(SSYT(\lambda,m),\langle \mathtt{c} \rangle, q^{-\kappa(\lambda)}s_\lambda(1,q,\ldots,q^{m-1}))$$
exhibits the CSP when $\ell(\lambda)$ (the length of $\lambda$) is less than $m$ and $\mbox{gcd}(m,\vert \lambda \vert)=1$.  This result was extended to skew shapes in \cite{alexander}.
The action arising from the crystal structure of a $U_q(\mathfrak{g})$-module was further studied in \cite{ohpark2}.  When $\mathfrak{g}$ is type $A_{m-1}$, $\ell(\lambda)<m$, and there is at least one fixed point under the action of $\mathtt{c}$, they showed that $(SSYT(\lambda,m),\langle {\tt c} \rangle, s_{\lambda}(1,q,q^{2},\ldots,q^{m-1}))$ exhibits the CSP if and only if $\lambda=(am)^b$ where $b=1$ or $b=m-1$.

In this paper, we consider CSPs for symplectic tableaux.  There are a few different types of symplectic tableaux, which index bases for irreducible $\mathfrak{sp}(2m)$-modules $V(\lambda)$, where $\ell(\lambda) \leq m$ (see \cite{sundaram} or \cite{stokkesymplectic}).    Symplectic King tableaux (see \cite{king, kingelsh}) are relatively easy to describe.  De Concini described a different version of symplectic tableaux in \cite{deconcini} and Sheats gave a weight-preserving bijection between De Concini and King tableaux in \cite{sheats}.    Kashiwara and Nakashima described symplectic tableaux endowed with a crystal structure in \cite{kashnak}.  These are related to De Concini tableaux through a straighforward bijection.

The highest weight $U_q(\mathfrak{sp}(2m))$-crystal $\mathcal{B}(\lambda)$ with highest weight $\lambda$ can be realized as the set of Kashiwara-Nakashima symplectic tableaux $SP(\lambda,2m)$ of shape $\lambda$.  The  crystal structure  leads to a cyclic action on $SP(\lambda,2m)$ given by the product $\sigma=\sigma_1\sigma_2 \cdots \sigma_m$ of simple reflection operators in the Weyl group and $G=\langle \sigma \rangle$ has order $2m$.

We prove our main results in the general situation for KN-skew symplectic tableaux $SP(\lambda/\mu,2m)$ coupled with any cyclic group action, with group order $2m$, such that a generator takes tableaux weights $(\chi_1,\ldots,\chi_m)$ to $(-\chi_m,\chi_1,\ldots,\chi_{m-1})$.  When the skew shape $\lambda/\mu$ has $n$ boxes, where $n$ is odd, and $gcd(m,p)=1$ for any odd prime $p\leq n$,  we prove that every orbit under such an action has order $2m$ in Theorem \ref{orderthm}.  As a corollary, this holds for $SP(\lambda,2m)$ with the action of $\langle \sigma \rangle$.

As is the case for semistandard tableaux, there is a hook-content formula that counts the number of symplectic tableaux (see \cite{campbellstokke,kingelsamra}) and we use its $q$-analogue $f_{sp}^\lambda(q)$ to give a CSP for $SP(\lambda,2m)$. We give a nice form for $f_{sp}^\lambda(q)$ in Section \ref{sec:hooks}.  Next, we partition weights into sets of size $2m$ using an action on the weights induced by the subgroup of $\mathfrak{S}_m$ corresponding to the dihedral group of order $2m$.  This allows us to give an appropriate form in Theorem \ref{equivthm}  for the polynomial $X(q)$ that we use to give a CSP for $SP(\lambda/\mu,2m)$. When $\mu=\emptyset$, $X(q)=q^{\kappa(\lambda)}f_{sp}^\lambda(q)$.

When $\vert \lambda/\mu \vert$ is odd and $gcd(m,p)=1$ for odd primes $p\leq n$, $SP(\lambda/\mu,2m)$, with a cyclic group action satisfying the properties described above, and polynomial $X(q)$ gives a CSP, which we prove in Theorem \ref{cspthm}.  As a corollary, for $\lambda$ a partition of an odd integer $n$ and  $gcd(m,p)=1$ for any odd prime $p\leq n$, the following is a CSP-triple: $$(SP(\lambda,2m),\langle \sigma \rangle, f_{sp}^\lambda(q)).$$  In \cite{pappe}, the authors prove another new CSP for Cartan type $C$.  They prove a CSP  for the set of highest weight elements of weight zero in the $n$-fold tensor power of the type $C_m$ crystal.  In \cite{prw}, the authors  gave a correspondence in this setting between the highest weight elements of weight zero and chord diagrams that intertwines promotion and rotation.

We begin the paper with a review of crystal base theory, with a particular focus on Cartan type $C_m$.  Next, we discuss Kashiwara-Nakashima tableaux and the associated crystal action in Section \ref{sec:tableaux}.  In Section \ref{sec:hooks} we prove results concerning the $q$-analogue of the symplectic hook-content formula.   Section \ref{sec:main} is devoted to our main results, where we prove our cyclic sieving phenomenon.

\section{Crystal bases}\label{sec:crystals}

In this section, we review crystal base theory.  For an introduction to Lie algebras, the reader is referred to \cite{erdmann} or \cite{humphreys}.   For a more thorough coverage of crystal bases, see \cite{bump} and \cite{hongkang}.

Let $\mathfrak{g}$ be a finite-dimensional simple complex Lie algebra and let $U_q(\mathfrak{g})$ be its quantum group.  Let $\Phi$ be its root system, with index set $I$, weight lattice $\Lambda$ and simple roots $\{\alpha_i \mid i \in I\}$.  The co-root of $\alpha \in \Phi$ is $\ds \alpha^{\vee}=\frac{2\alpha}{(\alpha, \alpha)}$.

We can associate a {\em Kashiwara crystal} ({\em crystal} for short)  to the root system.  This is a set $\mathcal{B}$ together with maps $\mbox{wt}: \mathcal{B} \rightarrow \Lambda$, $e_i$, $f_i: \mathcal{B} \rightarrow \mathcal{B} \sqcup \{0\}$ and $\epsilon_i$, $\phi_i:\mathcal{B} \rightarrow \mathbb{Z} \sqcup \{-\infty\}$ satisfying the following properties:
\begin{enumerate}
\item $f_i(b)=b^\prime$ if and only if $b=e_i(b^\prime)$ for all $b,b^\prime \in \mathcal{B}$, $i \in I$;
\item $\mbox{wt}(e_i(b))=\mbox{wt}(b)+ \alpha_i$, if $e_i(b) \in \mathcal{B}$, and $\mbox{wt}(f_i(b))=\mbox{wt}(b)- \alpha_i$, if $f_i(b) \in \mathcal{B}$;
\item $\epsilon_i(e_i(b))=\epsilon_i(b)-1$ and $\phi_i(e_i(b))=\phi_i(b)+1$ if $e_i(b) \in \mathcal{B}$;
\item $\epsilon_i(f_i(b))=\epsilon_i(b)+1$ and $\phi_i(f_i(b))=\phi_i(b)-1$ if $f_i(b) \in \mathcal{B}$;
\item $\phi_i(b)=\epsilon_i(b)+( \mbox{wt}(b), \alpha_i^\vee )$ for all $i \in I$;
\item If $\phi_i(b)=-\infty$ for $b \in B$, then $e_i(b)=f_i(b)=0.$

\end{enumerate}

The {\em crystal graph} of $\mathcal{B}$ is a directed graph, which is given  by taking $\mathcal{B}$ as the set of vertices and defining an edge $b \overset{i}\rightarrow b^\prime$ if and only if $f_i(b)=b^\prime$ for $i \in I$.

There is a crystal $\mathcal{B}(\lambda)$ associated to each irreducible highest weight $U_q(\mathfrak{g})$-module $V_q(\lambda)$ that reflects its structure. Kashiwara and Nakashima gave Young tableaux realizations of crystal bases $\mathcal{B}(\lambda)$ for classical simple Lie algebra types in \cite{kashnak}.
In this paper we focus on Cartan type $C_m$ and, unless stated otherwise, when we refer to the crystal $\mathcal{B}(\lambda)$, we mean the crystal of the irreducible highest weight
$U_q(\mathfrak{sp}(2m))$-module with highest weight $\lambda$.

\begin{exa} The type $C_m \ (m \geq 2)$ finite-dimensional Lie algebra can be realized as the symplectic Lie algebra $\mathfrak{sp}(2m, \mathbb{C})$.  If ${\be_i}=(0,\ldots,1, \ldots,0)$ denotes the unit vector with one in the $i$th position, then $$\Phi=\{\pm \be_i \pm \be_j \mid i <j\} \cup \{\pm 2\be_i\},$$ and the set of positive roots are
$$\Phi^+=\{{\be_i} \pm \be_j \mid i<j\} \cup \{2\be_i\}.$$
The weight lattice is $\Lambda=\mathbb{Z}^m$ and a weight $\lambda=(\lambda_1,\ldots, \lambda_m)$ is dominant if and only if $\lambda_1 \geq \lambda_2 \geq \cdots \geq \lambda_m \geq 0$.  Let $\alpha_i=\be_i-\be_{i+1}$, for $1 \leq i \leq m-1$ and let $\alpha_m=2\be_m$.  Then $\{\alpha_1,\ldots,\alpha_{m-1},\alpha_m\}$ is the set of simple roots, which is a basis for  $\Phi$, and the {\em fundamental weights} are $\omega_i=\be_1+\be_2+\cdots + \be_i$, $1 \leq i \leq m$.  The Weyl group for  $\mathfrak{sp}(2m)$ is the hyperoctahedral group, which is the group of signed permutations $\pi$ of $\{\pm 1,\pm 2,\ldots, \pm m\}$, where $\pi(-i)=-\pi(i)$ for $1 \leq i \leq m$.  

\bigskip

The standard $C_m$-crystal  $\mathcal{B}(1)$ has crystal graph and crystal operator as follows:
$$\vy{1\cr}  \overset{\scc 1}\longrightarrow \vy{2 \cr}  \overset{\scc 2}\longrightarrow \cdots  \overset{\scc m-1}\longrightarrow \vy{m \cr}  \overset{\scc m}\longrightarrow \vy{\ov{m}\cr}  \overset{\scc m-1}\longrightarrow  \cdots  \overset{\scc 2}\longrightarrow \vy{\ov{2} \cr}  \overset{\scc 1}\longrightarrow \vy{\ov{1}\cr}$$

$$f_i\left(\vy{j \cr}\right)=\begin{cases}\vy{\scc i+1 \cr} & \mbox{if } j=i \mbox{ and } 1 \leq i \leq m-1\cr
\vy{\ov{i} \cr}  & \mbox{if } i=j=m \mbox{ or } j=  \ov{i+1} \cr
\ \ 0 & \mbox{otherwise} \cr \end{cases}$$

\noindent As well,  $\mbox{wt}\lb\vy{i\cr}\rb=\be_i$, $\mbox{wt}\lb\vy{\ov{i}\cr}\rb =-\be_i$, $\phi_i(x)=\mbox{max}\{ k \in \mathbb{Z}_{\geq 0} \mid f_i^k(x) \neq 0\}$ and $\epsilon_i(x)=\mbox{max}\{k \in \mathbb{Z}_{\geq 0} \mid e_i^k(x) \neq 0\}$.\end{exa}

The tensor product $\mathcal{B} \otimes \mathcal{C}$ of two crystals with the same underlying root system has a crystal structure with
$\mbox{wt}(x \otimes y)=\mbox{wt}(x) + \mbox{wt}(y),$ for $x \in \mathcal{B}$, $y \in \mathcal{C}$, and  tensor product formula

$$f_i \lb x \otimes y \rb=\begin{cases}f_i\lb x  \rb \otimes  y & \mbox{if } \phi_i\lb y \rb \leq \epsilon_i\lb x \rb \cr
x \otimes f_i\lb y \rb & \mbox{otherwise} \cr
\end{cases}$$

$$e_i \lb x  \otimes  y \rb=\begin{cases} x \otimes e_i \lb y \rb & \mbox{if } \phi_i \lb y \rb \geq \epsilon_i \lb x \rb \cr e_i\lb x \rb \otimes  y  & \mbox{otherwise} \cr

\end{cases},$$
where $\phi_i \lb x  \otimes  y \rb=\phi_i \lb x \rb + \mbox{max} ( 0,\phi_i \lb y \rb - \epsilon_i  \lb x \rb )$, $\epsilon_i\lb x  \otimes  y \rb=\epsilon_i \lb y \rb + \mbox{max}(0,\epsilon_i \lb x \rb-\phi_i \lb y \rb).$ (The above coincides with the tensor product rule used in \cite{bump}, but is slightly different than the tensor product rule in \cite{hongkang}.)

The procedure for applying $f_i$ to an element of $\mathcal{B}(1)^{\otimes k}$ can be determined combinatorially using the (symplectic) {\em signature rule}.    If $\vy{x_1\cr} \otimes \vy{x_2 \cr} \otimes \cdots \otimes \vy{x_k \cr} \in \mathcal{B}(1)^{\otimes k}$, then $$f_i \lb\vy{x_1\cr} \otimes \vy{x_2 \cr} \otimes \cdots \otimes \vy{x_k \cr} \rb=\vy{x_1\cr} \otimes \vy{x_2 \cr} \otimes \cdots \otimes f_i\lb \vy{x_j \cr} \rb \otimes \cdots \otimes \vy{x_k \cr}$$ where $x_j$ is determined as follows:

\begin{enumerate}
\item Place a $-$ above $x_s$ if $x_s=i$ or $x_s=\ov{i+1}$ and place a $+$ above $x_s$ if $x_s=\ov{i}$ or $x_s=i+1$.  If every $-$ is left of every $+$ then $x_j$ is equal to the rightmost $x_s$ that is labelled with $-$.
\item Otherwise,  bracket a $+$ with a $-$ to its right so that there are no $+$'s or $-$'s in between.
\item Continue bracketing $+$'s with $-$'s with no unbracketed $+$'s or $-$'s in between until all unbracketed $-$'s are left of unbracketed $+$'s.

\item Choose $x_j$ to be the rightmost unbracketed $-$.  If there are no unbracketed $-$'s, the result is $0$.
\end{enumerate}

\begin{exa}To determine $f_1\lb \vy{1\cr} \otimes \vy{\ov{2} \cr} \otimes \vy{\ov{1}\cr} \otimes \vy{2 \cr} \otimes \vy{\ov{2}\cr} \otimes \vy{1\cr} \rb$, bracket as follows:
$$\overset{-}{\vy{1\cr}} \otimes \overset{-}{\vy{\ov{2} \cr}} \otimes \overset{(+}{\vy{\ov{1}\cr}} \otimes \overset{(+}{\vy{2 \cr}} \otimes \overset{-)}{\vy{\ov{2}\cr}} \otimes \overset{-)}{\vy{1\cr}}, \mbox{ so}$$
 $f_1\lb \vy{1\cr} \otimes \vy{\ov{2} \cr} \otimes \vy{\ov{1}\cr} \otimes \vy{2 \cr} \otimes \vy{\ov{2}\cr} \otimes \vy{1\cr} \rb= \vy{1\cr} \otimes \vy{\ov{1} \cr} \otimes \vy{\ov{1}\cr} \otimes \vy{2 \cr} \otimes \vy{\ov{2}\cr} \otimes \vy{1\cr}.$ \end{exa}
\section{Kashiwara-Nakashima (symplectic) tableaux}

\label{sec:tableaux}

Crystals of tableaux for type $C_m$ are constructed by embedding {\em Kashiwara-Nakashima tableaux} (KN-tableaux) into tensor powers of the standard crystal.

A {\em partition} $\lambda$ of a positive integer $n$ is a $k$-tuple $\lambda=(\lambda_1,\ldots,\lambda_k)$, where $\lambda_1 \geq \lambda_2 \geq \cdots \geq \lambda_k>0$ and $\vert \lambda \vert= \sum_{i=1}^k \lambda_i=n$.  The length of $\lambda$ is $\ell(\lambda)=k$ and the {\em Young diagram} of shape $\lambda$ is given by arranging $n$ boxes in $k$ left-justifed rows with $\lambda_i$ boxes in the $i$th row.  The conjugate of $\lambda$ is the partition $\lambda^t = (\lambda_1^t, \lambda_2^t, . . . , \lambda_r^t)$ where $\lambda_i^t$ is the number of boxes in the $i$th column of the Young diagram of shape $\lambda$.

A {\em semistandard tableau} of shape $\lambda$ is a filling of the Young diagram of shape $\lambda$ with positive integers such that the entries in each row are weakly increasing from left to right and the entries in each column are strictly increasing from top to bottom.  The set  of semistandard tableaux of a given shape $\lambda$  admits a $U_q(\mathfrak{sl}(n))$-crystal structure (see \cite{bump} for details).

The irreducible $\mathfrak{sp}(2m)$-representations are indexed by partitions $\lambda$ with $\ell(\lambda)\leq m$ so we will assume  that $\ell(\lambda) \leq m$.
Semistandard KN-tableaux have entries from the set $\mathcal{M}=\{1, 2, \ldots, m, \ov{m}, \ldots, \ov{1}\}$ with ordering $$1 <2 < \cdots <m < \ov{m} < \cdots <\ov{2} <\ov{1}.$$
A semistandard KN-tableau $T$ of shape $\lambda$ is a filling of the Young diagram of shape $\lambda$ with entries from $\mathcal{M}$ that satisfies the following properties:
\begin{enumerate}
\item The entries in $T$ are weakly increasing across rows from left to right and strictly increasing down columns from top to bottom.
\item For every column in $T$ that contains both an $i$ and an $\ov{i}$, where $i$ belongs to the $p$-th box from the top and $\ov{i}$ belongs to the $q$-th box from the bottom, we have $p+q\leq i$.

\item If $T$ has two adjacent columns having one of the following configurations, where $p,q,r,s$ are the relevant row numbers (where rows are counted from top to bottom), with $p \leq q <r \leq s$ and $i \leq j$, then $(q-p)+(s-r)<j-i$.

$$\left.\begin{array}{lr}
 p \rightarrow  & i \\
\\
  q \rightarrow &  \\
   &  \\
  r \rightarrow & \\
&\\
  s \rightarrow &
  \end{array}
  \right|  \begin{array}{r}  \cr \cr j \cr  \cr \ov{j} \cr  \cr \ov{i} \cr \end{array}\shiftnoarrow{-1.25}{,} \quad \quad
 \left.\begin{array}{r}
i \\
\\
 j  \\
   \\
\ov{j} \\
\\
\\
  \end{array}
  \right|  \begin{array}{r}  \cr
   \cr
   \cr
\cr
\cr
\cr
\ov{i} \cr
  \end{array}\shiftnoarrow{-1.25}{.}$$

\end{enumerate}
We will denote the set of KN-symplectic tableaux of shape $\lambda$ with entries from $\mathcal{M}$ by $SP(\lambda,2m)$.
\begin{exa} The tableau $T=\vy{1 & 2 \cr \ov{3} & \ov{2} \cr \ov{2} & \ov{1} \cr}$ is not a KN-symplectic tableau since it violates the second property:  $2$ belongs to row $1$ and $\ov{2}$ belongs to row $2$ so  $p+q=3 > 2$.

On the other hand, $T=\vy{1 & 3 \cr \ov{3} & \ov{3} \cr \ov{2} & \ov{1} \cr}$  is a KN-symplectic tableau.
For property (2), $p+q=3 \leq 3$ and for property (3), $p=q=1$, $r=2$ and $s=3$ so $(q-p)+(s-r)=1<j-i=2$.

\end{exa}

The set $SP(\lambda,2m)$, where $\lambda$ is a partition of $n$, admits a crystal structure, which is given by embedding $SP(\lambda,2m)$ into the $n$-fold tensor power $\mathcal{B}(1)^{\otimes n}$ to give a bijection with a connected component of the crystal $\mathcal{B}(1)^{\otimes n}$.

The {\em column reading word} of a tableau $T \in SP(\lambda,2m)$ is the element $C(T) \in \mathcal{B}(1)^{\otimes n}$ given by reading the entries up columns from bottom to top, starting with the leftmost column.  To give a crystal structure on $SP(\lambda,2m)$, $T \in SP(\lambda, 2m)$ is identified with its image $C(T)$ in $\mathcal{B}(1)^{\otimes n}$ and the action of a crystal operator on $T$ is given by its action on $C(T)$.   The set of KN-tableaux $SP(\lambda,2m)$ is crystal isomorphic to the highest weight $U_q(\mathfrak{sp}(2m))$-crystal $\mathcal{B}(\lambda)$ with highest weight $\lambda$.   For details, see \cite[\S 6.3]{bump} or \cite[\S 8.3]{hongkang}.

If $T \in SP(\lambda,2m)$ and if $a_i$ (respectively $a_{\ov{i}}$) is equal to the number of entries equal to $i$ (respectively $\ov{i}$) in $T$, then the weight of $T$ is $\mbox{wt}(T)=(\chi_1,\ldots,\chi_m)$, where $\chi_i=a_i-a_{\ov{i}}$. Let $SP(\lambda,\chi)=\{T \in SP(\lambda,2m) \mid wt(T)=\chi\}$ and let $wt(SP(\lambda))$ denote the set of weights $\chi$ in $\Lambda$ for which there is a tableau $T \in SP(\lambda,2m)$ with $wt(T)=\chi$.

\begin{exa} \label{opeg} For $T=\vy{ 1 & 3 \cr \ov{3} & \ov{3} \cr \ov{2} & \ov{1} \cr} \in SP((2,2,2),6)$,
 $C(T)=\vy{\ov{2}\cr} \otimes \vy{\ov{3}\cr} \otimes \vy{1 \cr} \otimes \vy{\ov{1}\cr} \otimes \vy{\ov{3}\cr} \otimes \vy{3 \cr}$  and $\mbox{wt}(T)=(0,-1,-1)$.   We have $f_2(C(T))=\vy{\ov{2}\cr} \otimes \vy{\ov{3}\cr} \otimes \vy{1\cr} \otimes \vy{\ov{1}\cr} \otimes \vy{\ov{2}\cr} \otimes \vy{3 \cr} $
so $f_2(T)=\vy{1 & 3\cr \ov{3} & \ov{2} \cr \ov{2} & \ov{1} \cr}$.

\noindent Figure \ref{fig:crystalgraph1} gives an example of a crystal graph. \end{exa}

\bigskip

\begin{figure}[htpb]
\centering
\includegraphics[width=0.2\textwidth]{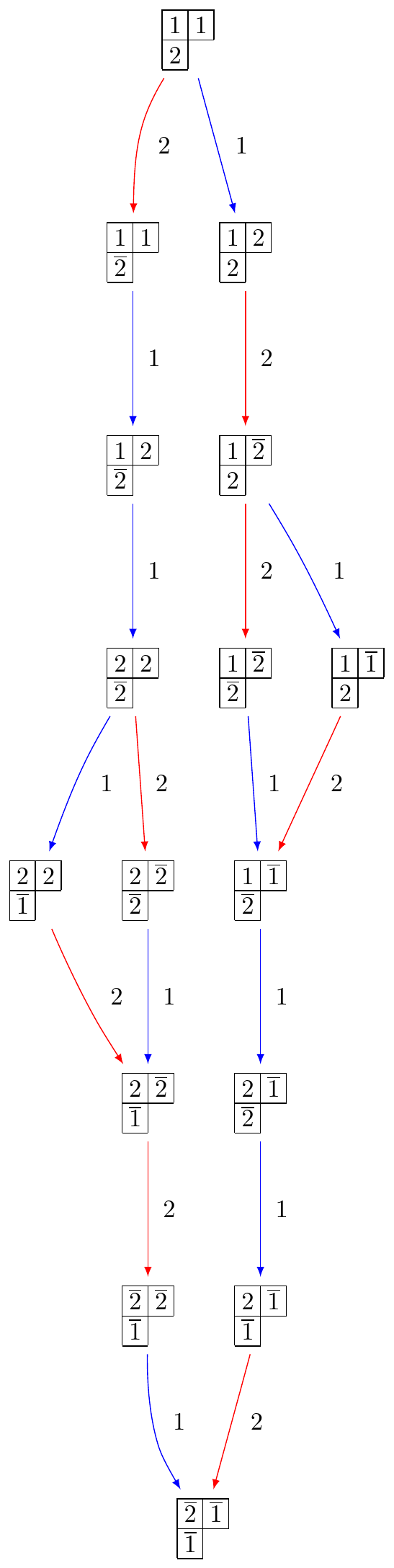}
\caption{Type $C_2$ crystal graph for $\lambda=(2,1)$.}
\label{fig:crystalgraph1}
\end{figure}

Given a simple reflection $s_i$ in the Weyl group $\mathcal{W}$,  $s_i(\chi)=\chi-( \chi,\alpha_i^\vee ) \alpha_i$ for $\chi \in \Lambda$, $i \in I$.  For type $C_m$, $s_i(\chi) =(\chi_1,\ldots,\chi_{i+1},\chi_i,\ldots,\chi_m)$ for $1 \leq i \leq m-1$ and $s_m (\chi)=(\chi_1,\ldots,\chi_{m-1},-\chi_m)$.

For $i \in I$,  define a bijection $\sigma_i$ on $\mathcal{B}(\lambda)$  by
\begin{equation} \label{sigmai} \sigma_i(b)=\begin{cases} f_i^k(b) & \mbox{if } k \geq 0 \cr
e_i^{-k}(b) & \mbox{if } k<0 \cr
\end{cases},\end{equation}
where $b \in \mathcal{B}(\lambda)$ and $k=(\mbox{wt}(b), \alpha_i^\vee)$.   The Weyl group $\mathcal{W}$ acts on $\mathcal{B}(\lambda)$ (\cite[Theorem 11.14]{bump}) by $s_i \cdot b =\sigma_i(b), \ b \in \mathcal{B}(\lambda).$
As well, we have (see \cite[Proposition 2.36]{bump}):
\begin{equation}\label{eqn1} \mbox{wt}(\sigma_i(b))=s_i(\mbox{wt}(b)).
\end{equation}
  Then $\sigma=\sigma_1 \sigma_2 \cdots \sigma_{m}$ gives a bijection on $\mathcal{B}(\lambda)$ and since the $\sigma_i$'s act on $\mathcal{B}(\lambda)$ as simple reflections of the Weyl group, $\sigma$ is a Coxeter element of $\mathcal{W}$ so has order equal to the Coxeter number of $\mathcal{W}$.  We summarize this in the following lemma.

  \begin{lem}\label{orderlem}Let  $\lambda$ be a partition and let $G=\langle \sigma \rangle$, where $\sigma=\sigma_1\cdots \sigma_m$. Then $G=\langle \sigma \rangle$ has order $2m$ and acts on $SP(\lambda,2m)$. \end{lem}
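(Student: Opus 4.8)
The plan is to recognize $\sigma$ as the image of a Coxeter element of $\mathcal{W}$ acting on $\mathcal{B}(\lambda)$, to pin down its order as a \emph{permutation} (not merely as an abstract group element) using the weight map, and then to transport everything to $SP(\lambda,2m)$ via the crystal isomorphism recorded in Section~\ref{sec:tableaux}.

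First I would observe that the Weyl-group action $s_i\cdot b=\sigma_i(b)$ (\cite[Theorem 11.14]{bump}) makes $w\mapsto(\text{action of }w)$ a group homomorphism $\rho\colon\mathcal{W}\to\mathrm{Sym}(\mathcal{B}(\lambda))$, under which $\sigma=\sigma_1\cdots\sigma_m=\rho(c)$ for the Coxeter element $c=s_1s_2\cdots s_m$. Since $\mathcal{W}$ is the Weyl group of type $C_m$, whose Coxeter number is $2m$, we have $c^{2m}=1$ in $\mathcal{W}$ and hence $\sigma^{2m}=\rho(c^{2m})=\mathrm{id}$, so the order of $\sigma$ divides $2m$. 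The substance of the lemma is the reverse direction: that $\rho$ does not collapse the order of $c$.

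For this I would use the weight function as an intertwiner. Iterating \eqref{eqn1} gives $\mbox{wt}(\sigma(b))=c\cdot\mbox{wt}(b)$, so $\mbox{wt}\colon\mathcal{B}(\lambda)\to\Lambda$ conjugates $\sigma$ into the linear action of $c$ on $\Lambda\otimes\mathbb{R}=\mathbb{R}^m$. From $s_i(\chi)=(\chi_1,\ldots,\chi_{i+1},\chi_i,\ldots,\chi_m)$ for $i<m$ and $s_m(\chi)=(\chi_1,\ldots,\chi_{m-1},-\chi_m)$, a short computation shows $c(\be_j)=\be_{j+1}$ for $j<m$ and $c(\be_m)=-\be_1$, whence $c^m=-I$ and $c$ has order exactly $2m$ as a linear map, with $c^d=I$ only when $2m\mid d$. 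Now suppose $\sigma^d=\mathrm{id}$ for some $d>0$; then $c^d$ fixes every weight $\mbox{wt}(b)$, $b\in\mathcal{B}(\lambda)$. Because $\mbox{wt}$ is $\mathcal{W}$-equivariant, $\mbox{wt}(\mathcal{B}(\lambda))$ is a $\mathcal{W}$-stable set containing the highest weight $\lambda$, hence contains the full orbit $\mathcal{W}\lambda$; as $\lambda\neq 0$ (partitions here are of positive integers) and the reflection representation of $C_m$ is irreducible, these weights span $\mathbb{R}^m$. Thus $c^d$ fixes a spanning set, forcing $c^d=I$ and $2m\mid d$. Combined with $\sigma^{2m}=\mathrm{id}$, the order of $\sigma$ is exactly $2m$, so $G=\langle\sigma\rangle$ has order $2m$. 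Finally, the crystal isomorphism $SP(\lambda,2m)\cong\mathcal{B}(\lambda)$ transports the bijection $\sigma$ to a bijection of $SP(\lambda,2m)$ of the same order, so $G$ acts on $SP(\lambda,2m)$ with $|G|=2m$.

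I expect the main obstacle to be exactly this faithfulness step: a priori $\rho$ could send the order-$2m$ element $c$ to a permutation of smaller order, and ruling that out is not formal. The clean way around it is the equivariance of $\mbox{wt}$ together with the fact that $\lambda\neq0$ forces the occurring weights to span $\mathbb{R}^m$ — which is precisely what irreducibility of the reflection representation of $C_m$ supplies. The one-line assertion in the surrounding text (``$\sigma$ is a Coxeter element, so has order the Coxeter number'') is really this argument in disguise, and I would make the spanning/irreducibility input explicit.
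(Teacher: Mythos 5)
Your proof is correct, and its skeleton---realize $\sigma$ as the image of the Coxeter element $c=s_1s_2\cdots s_m$ under the Weyl group action of \cite[Theorem 11.14]{bump}, then invoke the Coxeter number $2m$ of type $C_m$---is exactly the paper's argument, which appears in the paragraph preceding the lemma rather than in a displayed proof. The genuine difference is your faithfulness step, and it is a real addition rather than a cosmetic one. The paper simply asserts that $\sigma$ ``is a Coxeter element of $\mathcal{W}$ so has order equal to the Coxeter number''; read literally, the action homomorphism $\rho\colon\mathcal{W}\to\mathrm{Sym}(\mathcal{B}(\lambda))$ only yields $\sigma^{2m}=\mathrm{id}$, i.e.\ that the order of $\sigma$ \emph{divides} $2m$, since a priori $\rho$ could collapse $\langle c\rangle$. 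Your argument closes this gap: iterating (\ref{eqn1}) gives $\mbox{wt}\circ\sigma=c\circ\mbox{wt}$, so $\sigma^d=\mathrm{id}$ forces $c^d$ to fix every weight of $\mathcal{B}(\lambda)$, in particular the full orbit $\mathcal{W}\lambda$, which spans $\mathbb{R}^m$ because $\lambda\neq 0$ and the reflection representation of $C_m$ is irreducible; since $c$ acts on $\mathbb{R}^m$ by $\be_j\mapsto\be_{j+1}$ ($j<m$), $\be_m\mapsto-\be_1$, hence with order exactly $2m$ (note $c^m=-I$), this forces $2m\mid d$. The paper implicitly has all the ingredients---its Lemma \ref{wtshiftlem} is precisely the statement that $c$ acts on weights by this order-$2m$ signed cycle---but never assembles them into a lower bound on the order of $\sigma$ as a permutation. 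So: same route, but your version supplies the one non-formal step (injectivity of $\rho$ on $\langle c\rangle$) that the paper takes for granted, and it also makes visible where the hypothesis $\lambda\neq\emptyset$ is actually used, since for $\lambda=\emptyset$ the permutation $\sigma$ is trivial and the lemma would fail.
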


\begin{exa}\label{orbiteg} Consider the action of $G=\langle \sigma \rangle$ on $SP((2,1),6)$, where $\sigma=\sigma_1\sigma_2\sigma_3$.  There are two orbits of size two:  $$\left\{ \vy{1 & \ov{2} \cr 3 \cr}, \vy{2 & \ov{1} \cr \ov{3} \cr} \right\}, \ \left\{ \vy{1 & 3 \cr \ov{2} \cr}, \vy{2 & \ov{3} \cr \ov{1} \cr} \right\},$$ and 10 orbits of size six.
\end{exa}

If $T \in SP(\lambda,\chi)$, it follows from (\ref{eqn1}) that for $1 \leq i \leq m-1$, $$\mbox{wt}(\sigma_i T)=(\chi_1,\ldots,\chi_{i+1},\chi_i,\ldots,\chi_m) \ \ \mbox{and} \ \ \mbox{wt}(\sigma_m T)=(\chi_1,\ldots,\chi_{m-1},-\chi_m).$$  This gives the following lemma, which will be useful throughout the paper.

\begin{lem}\label{wtshiftlem} If $T \in SP(\lambda,2m)$ and $\mbox{wt}(T)=(\chi_1,\ldots,\chi_m)$ then $$\mbox{wt}(\sigma T)=(-\chi_m,\chi_1,\ldots,\chi_{m-1}).$$ \end{lem}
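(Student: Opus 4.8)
The plan is to compute $\mbox{wt}(\sigma T)$ directly by applying the displayed formulas for the action of each $\sigma_i$ on weights, composing them in the correct order. Since $\sigma = \sigma_1 \sigma_2 \cdots \sigma_m$, and these operators act on weights exactly as the corresponding simple reflections $s_i$ (this is equation (\ref{eqn1})), it suffices to track how the composite reflection $s_1 s_2 \cdots s_m$ transforms a weight vector $(\chi_1, \ldots, \chi_m)$. The single genuine subtlety is the order of composition: because $\sigma T = \sigma_1(\sigma_2(\cdots \sigma_m(T)))$, the operator $\sigma_m$ is applied to $T$ first, then $\sigma_{m-1}$, and so on, so I must read the product from right to left.

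First I would apply $\sigma_m$. By the formula $s_m(\chi) = (\chi_1, \ldots, \chi_{m-1}, -\chi_m)$, this negates the last coordinate, giving weight $(\chi_1, \ldots, \chi_{m-1}, -\chi_m)$. Next I would apply $\sigma_{m-1}$, which by $s_{m-1}(\chi) = (\ldots, \chi_m, \chi_{m-1}, \ldots)$ swaps the $(m-1)$st and $m$th coordinates, producing $(\chi_1, \ldots, \chi_{m-2}, -\chi_m, \chi_{m-1})$. Continuing down through $\sigma_{m-2}, \ldots, \sigma_1$, each $\sigma_i$ transposes coordinates $i$ and $i+1$, and the effect of this descending sequence of adjacent transpositions is a cyclic shift: at each stage the value $-\chi_m$ is moved one position to the left while everything it passes shifts one position to the right.

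Carrying this out to $\sigma_1$, I would obtain $\mbox{wt}(\sigma T) = (-\chi_m, \chi_1, \chi_2, \ldots, \chi_{m-1})$, which is exactly the claimed formula. To present this cleanly I would either exhibit the intermediate weight after applying $\sigma_m \sigma_{m-1} \cdots \sigma_{i+1}$ by an inductive/pattern argument, showing it equals $(\chi_1, \ldots, \chi_i, -\chi_m, \chi_{i+1}, \ldots, \chi_{m-1})$, or simply lay out the chain of equalities for the first few and last few steps and invoke the evident pattern. The key tool throughout is equation (\ref{eqn1}), $\mbox{wt}(\sigma_i(b)) = s_i(\mbox{wt}(b))$, together with the explicit type $C_m$ reflection formulas stated just before Lemma \ref{orderlem}.

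The main obstacle here is not any deep idea but rather bookkeeping: one must be careful that the sign flip introduced by $\sigma_m$ is attached to $\chi_m$ specifically, and that the subsequent transpositions carry this signed entry all the way to the front rather than, say, accidentally transporting an unsigned coordinate. A clean way to avoid error is to verify the induction hypothesis that after applying $\sigma_m \cdots \sigma_{i+1}$ the weight is $(\chi_1, \ldots, \chi_i, -\chi_m, \chi_{i+1}, \ldots, \chi_{m-1})$, then check that applying $\sigma_i$ (the transposition of positions $i$ and $i+1$) advances this to the $i-1$ case, with the base case $i = m$ being the initial sign flip. This makes the cyclic-shift conclusion transparent and rigorous.
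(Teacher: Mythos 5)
Your proposal is correct and takes essentially the same route as the paper: the paper presents the lemma as an immediate consequence of the displayed formulas $\mbox{wt}(\sigma_i T)=(\chi_1,\ldots,\chi_{i+1},\chi_i,\ldots,\chi_m)$ and $\mbox{wt}(\sigma_m T)=(\chi_1,\ldots,\chi_{m-1},-\chi_m)$ derived from equation (\ref{eqn1}), which is precisely the right-to-left composition of simple reflections you carry out. Your explicit induction (and your care that $\sigma_m$ acts first, so the sign travels with $\chi_m$ to the front) just fills in the bookkeeping the paper leaves implicit.
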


Given two partitions $\mu$ and $\lambda$, $\mu \subseteq \lambda$ if $\mu_i \leq \lambda_i$ for all $i$.  The skew shape $\lambda/\mu$ is obtained by removing the boxes of the Young diagram of shape $\mu$ from that of shape $\lambda$ and $\vert \lambda/\mu \vert =\vert \lambda \vert - \vert \mu \vert$.   The set of KN-skew symplectic tableaux of shape $\lambda/\mu$ with entries from the set $1<2<\cdots <m<\ov{m} < \cdots <\ov{1}$ will be denoted $SP(\lambda/\mu,2m)$.  Their entries satisfy the conditions  defined in \cite[\S 2.2]{santos} (see also \cite[\S 6]{lecouvey}) and, when $\mu=\emptyset$, we obtain  $SP(\lambda, 2m)$.  Our proofs in Section $5$ will refer to the weights of KN-skew symplectic tableaux, which are defined in the same way as for KN-symplectic tableaux.   We let $SP(\lambda/\mu,\chi)=\{ T \in SP(\lambda/\mu) \mid wt(T)=\chi\}$ and $wt(SP(\lambda/\mu))$ is the set of $m$-tuples $\chi$ for which there is a $T \in SP(\lambda/\mu, 2m)$ with $wt(T)=\chi$.  There is also a crystal structure on $SP(\lambda/\mu, 2m)$ (see \cite[\S 6]{lecouvey}).  Lemma \ref{wtshiftlem}, and the discussion preceding it, also apply in this setting.

\section{The symplectic hook-content formula}\label{sec:hooks}
Let $T \in SP(\lambda,2m)$ where $wt(T)=(\chi_1,\ldots,\chi_m)$ with $\chi_i=a_i-a_{\ov{i}}$, where $a_i$ records the number of $i$'s in $T$ and $a_{\ov{i}}$ the number of $\ov{i}$'s in $T$.
Define \begin{equation}\label{eqpwr} pwr(T)=\sum_{i=1}^m ((i-1)a_i+(2m-i)a_{\ov{i}}).\end{equation}  Then $q^{pwr(T)}$ is the product given by assigning $q^{i-1}$ to each $i \in \{1,\ldots,m\}$ in $T$ and $q^{2m-i}$ to each entry $\ov{i} \in \{ \ov{1},\ldots,\ov{m}\}$ in $T$.

\begin{lem}\label{spspec} Let $\lambda$ be a partition of $n$ and let $T\in SP(\lambda,\chi)$. Then $$pwr(T)=\sum_{i=1}^m (i-1) \chi_i + \frac{2m-1}{2}\left(n-\sum_{i=1}^m \chi_i \right).$$ 

\end{lem}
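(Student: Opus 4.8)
The plan is to prove the identity by a direct algebraic manipulation, converting both sides into expressions in the occupation numbers $a_i$ and $a_{\ov{i}}$ and checking they agree term by term. The starting point is the defining relation $\chi_i = a_i - a_{\ov{i}}$ together with two elementary counting facts about $T$. First, since $T$ is a tableau of shape $\lambda$ with $n$ boxes, every box carries exactly one entry from $\mathcal{M}$, so
\[
n = \sum_{i=1}^m \left( a_i + a_{\ov{i}} \right).
\]
Second, by definition $\sum_{i=1}^m \chi_i = \sum_{i=1}^m (a_i - a_{\ov{i}})$. Subtracting the latter from the former yields the key relation
\[
n - \sum_{i=1}^m \chi_i = 2\sum_{i=1}^m a_{\ov{i}},
\]
which is precisely what ties the ``barred content'' $2m-i$ to the weight data and powers the whole computation.

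Next I would expand the right-hand side of the claimed formula. Writing $\sum_{i=1}^m (i-1)\chi_i = \sum_{i=1}^m (i-1) a_i - \sum_{i=1}^m (i-1) a_{\ov{i}}$ via $\chi_i = a_i - a_{\ov{i}}$, and substituting the key relation into the second summand so that
\[
\frac{2m-1}{2}\left( n - \sum_{i=1}^m \chi_i \right) = (2m-1)\sum_{i=1}^m a_{\ov{i}},
\]
the right-hand side becomes $\sum_{i=1}^m (i-1)a_i + \sum_{i=1}^m \bigl[(2m-1)-(i-1)\bigr] a_{\ov{i}}$. It only remains to observe that $(2m-1)-(i-1) = 2m - i$, so the barred contribution collapses to exactly $\sum_{i=1}^m (2m-i)a_{\ov{i}}$, and the two summands together are precisely the definition of $pwr(T)$ in \eqref{eqpwr}.

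There is no real obstacle here: the statement is a bookkeeping identity and the argument is a one-line calculation once the relation $n - \sum_i \chi_i = 2\sum_i a_{\ov{i}}$ is in hand. The only point requiring a little care is verifying that the awkward-looking factor $\tfrac{2m-1}{2}$ combines correctly: it is chosen exactly so that, after the factor of $2$ produced by $n - \sum_i \chi_i$, the coefficient of $a_{\ov{i}}$ becomes $(2m-1)-(i-1)=2m-i$ rather than anything else. (Implicitly this uses that $n$ is the number of boxes, not an odd-integer hypothesis; oddness of $n$ plays no role in this lemma and need not be invoked.)
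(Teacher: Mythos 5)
Your proof is correct and is essentially the same argument as the paper's: both rest on the two counting identities $\sum_i(a_i+a_{\ov{i}})=n$ and $\sum_i(a_i-a_{\ov{i}})=\sum_i\chi_i$, and differ only in direction (you expand the claimed formula back into the $a_i,a_{\ov{i}}$ to recover the definition of $pwr(T)$, while the paper rewrites the definition of $pwr(T)$ forward into weight data). Your closing remark is also accurate: the lemma is pure bookkeeping and the oddness of $n$ plays no role here.
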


\begin{proof} Let $wt(T)=\chi=(\chi_1,\ldots,\chi_m)$, where $\chi_i=a_i-a_{\ov{i}}$. Since \begin{equation}\label{wtsum} \sum_{i=1}^m (a_i + a_{\ov{i}})=n \mbox{ and }  \sum_{i=1}^m (a_i-a_{\ov{i}})=\sum_{i=1}^m \chi_i, \end{equation}
\begin{eqnarray*} pwr(T)&=&\sum_{i=1}^m i \chi_i-\frac{1}{2}\left(n+\sum_{i=1}^m \chi_i\right)+m\left(n-\sum_{i=1}^m \chi_i\right) \cr
&=& \sum_{i=1}^m (i-1) \chi_i + \frac{2m-1}{2}\left(n-\sum_{i=1}^m \chi_i \right).\cr \end{eqnarray*}


\end{proof}

The above lemma shows that $pwr(T)$ is completely determined by the weight of $T$.  We will also use the notation $pwr(\chi)$ to denote $pwr(T)$, where $wt(T)=\chi$.  In light of the lemma, we have the following:
$$\sum_{T \in SP(\lambda,2m)}q^{pwr(T)}=\sum_{\chi \in wt(SP(\lambda))}\vert SP(\lambda,\chi) \vert q^{pwr(\chi)}.$$

\begin{exa} (1) For $T \in SP((2,2,2),6)$ as in Example \ref{opeg}, $pwr(T)=17$.

\bigskip

\noindent (2) Let $\lambda=(2,1)$ and $m=2$.  Referring to Figure \ref{fig:crystalgraph1}, $\vert SP(\lambda,2m) \vert =16$ and
\begin{eqnarray*} \sum_{T \in SP(\lambda,2m)} q^{pwr(T)}&=&q+2q^2+2q^3+3q^4+3q^5+2q^6+2q^7+q^8\cr& \equiv& 4(1+q+q^2+q^3) \mod q^4-1.\end{eqnarray*}This polynomial is of the type covered by Theorem \ref{equivthm}.\end{exa}

\bigskip

The {\em hook length} of a box in the $i$th row and $j$th column of the Young diagram of shape $\lambda$ is the number of boxes in its hook.  In other words, $h(i,j)=\lambda_i+\lambda_j^t-i-j+1$.  Define
\begin{equation*}
r_\lambda(i,j) = \left\{
\begin{array}{ll}
\lambda_i+\lambda_j-i-j+2    & \text{if $i>j$,}\\
i+j-\lambda_i^t-\lambda_j^t  & \text{if $i \leq j$.}
\end{array}\right.
\end{equation*}

\noindent The hook-content formula for symplectic tableaux \cite[Corollary 4.6]{campbellstokke} is given by
\begin{equation}\label{hookcontent}\vert SP(\lambda,2m)\vert=\prod_{(i,j) \in [\lambda]}
\frac{2m+r_\lambda(i,j)} { h(i,j) }.\end{equation}

\noindent For staircase tableaux of the form $\lambda=(m,m-1,\ldots,1)$, the above formula simplifies nicely \cite[Corollary 4.48]{bernsteinstriker}.  

For $T \in SP(\lambda,\chi)$,  define $x^{wt(T)}=x_1^{\chi_1}x_2^{\chi_2} \cdots x_m^{\chi_m}=\prod_{i=1}^m x_i^{a_i-a_{\bar{i}}}$.   The symplectic Schur function is the character of the irreducible $\mathfrak{sp}(2m)$-representation with highest weight $\lambda$, defined as $$ sp_{\lambda}(x_1^{\pm 1},\ldots,x_m^{\pm 1})=\sum_{T \in SP(\lambda,2m)} x^{wt(T)}.$$
In \cite{campbellstokke}, we worked with a specialization $sp_\lambda(q,q^3,q^5,\ldots,q^{2m-1})$ to prove (\ref{hookcontent}), but this polynomial does not work as a CSP polynomial with the action under consideration.  Instead, we will use a natural $q$-analogue of (\ref{hookcontent}) as our CSP polynomial.

Using \cite[Equation~24.18]{fultonharris}, with $x_j=q^{j-1}$ and $x_j^{-1}=q^{2m-j}$, we can express $\sum q^{pwr(T)}$ as a quotient of determinants:
\begin{equation}\label{spdeterm}
\sum_{T \in SP(\lambda,2m)} q^{pwr(T)}=
\frac{\vert q^{(j-1)(\lambda_i+m-i+1)}-q^{(2m-j)(\lambda_{i}+m-i+1)}\vert_{i,j=1}^{m}}{\vert q^{(j-1)(m-i+1)}-q^{(2m-j)(m-i+1)} \vert_{i,j=1}^{m}}.
\end{equation}
Here we take $\lambda_i=0$ when $i>\ell(\lambda)$.  For a positive integer $k$ define
$[k]=1-q^k$ and let $[k]!=[k][k-1] \cdots [1]$. (Note that in \cite{campbellstokke} we worked with $\langle k \rangle = q^k-q^{-k}$.)

\begin{lem}\label{firstqlem} Let $\lambda$ be a partition and  let $\mu_i=\lambda_i+m-i$.  Then $$\vert q^{(j-1)(\mu_i+1)}-q^{(2m-j)(\mu_i+1)}\vert_{i,j=1}^m =\ds q^{\sum\limits_{i=1}^m (i-1)(\mu_i+1)} \prod_{i=1}^m [\mu_i+1]\prod_{1 \leq i <j \leq m} [\mu_i-\mu_j][\mu_i+\mu_j+2].$$
 \end{lem}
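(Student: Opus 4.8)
The plan is to clear the $q$'s by setting $x_i=q^{\mu_i+1}$ for $1\le i\le m$ and first establishing the polynomial identity
$$\det\left(x_i^{\,j-1}-x_i^{\,2m-j}\right)_{i,j=1}^m=\prod_{i=1}^m(1-x_i)\prod_{1\le i<j\le m}(x_j-x_i)(1-x_ix_j)$$
in independent indeterminates $x_1,\dots,x_m$. Write $D$ for the determinant on the left. I would prove this by the standard divisibility/degree/leading-coefficient method in the UFD $\mathbb{Q}(q)[x_1,\dots,x_m]$. For the factors $1-x_i$: setting $x_i=1$ makes every entry $x_i^{j-1}-x_i^{2m-j}$ of the $i$th row vanish, so that row is zero and $(1-x_i)\mid D$. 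For $x_j-x_i$: setting $x_i=x_j$ makes rows $i$ and $j$ identical, so $(x_j-x_i)\mid D$.

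The third family is the only subtle divisibility. To see $(1-x_ix_j)\mid D$ I would specialize $x_j=1/x_i$ and exhibit a linear dependence between rows $i$ and $j$: multiplying the $j$th row through by $x_i^{2m-1}$ sends its $k$th entry $x_i^{-(k-1)}-x_i^{-(2m-k)}$ to $x_i^{2m-k}-x_i^{k-1}=-\bigl(x_i^{k-1}-x_i^{2m-k}\bigr)$, i.e. to the negative of the $k$th entry of row $i$. Hence the two rows are proportional when $x_ix_j=1$, so $(1-x_ix_j)\mid D$. Since the $m$ linear factors $1-x_i$, the $\binom{m}{2}$ factors $x_j-x_i$, and the $\binom{m}{2}$ irreducible factors $1-x_ix_j$ are pairwise non-associate, their product $P$ divides $D$.

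It remains to check that $D=P$ rather than merely $D=cP$. A degree count settles that $c$ is a constant: in each entry the top $x_i$-degree term is $-x_i^{2m-j}$ (since $2m-j>j-1$ for $j\le m$), so $D$ and $P$ both have total degree $2m^2-\tfrac{m(m+1)}2=\tfrac{m(3m-1)}2$. Comparing top-degree parts pins down $c=1$: the leading part of $D$ is $(-1)^m\det(x_i^{2m-j})=(-1)^m\prod_i x_i^{m}\cdot(-1)^{\binom{m}{2}}\prod_{i<j}(x_j-x_i)$ (reversing the columns of a Vandermonde contributes $(-1)^{\binom m2}$), while the leading part of $P$, obtained by taking $-x_i$ from each $1-x_i$ and $-x_ix_j$ from each $1-x_ix_j$ and using $\prod_{i<j}x_ix_j=\prod_i x_i^{m-1}$, is $(-1)^{m+\binom m2}\prod_i x_i^{m}\prod_{i<j}(x_j-x_i)$; these agree.

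Finally I would substitute $x_i=q^{\mu_i+1}$. Here $1-x_i=[\mu_i+1]$ and $1-x_ix_j=[\mu_i+\mu_j+2]$ contribute no prefactor, while, using that $\mu_i>\mu_j$ for $i<j$, $x_j-x_i=q^{\mu_j+1}(1-q^{\mu_i-\mu_j})=q^{\mu_j+1}[\mu_i-\mu_j]$; collecting the resulting powers gives the prefactor $q^{\sum_{i<j}(\mu_j+1)}=q^{\sum_{i=1}^m(i-1)(\mu_i+1)}$, exactly the claimed power, and the three products assemble into the stated right-hand side. The main obstacle is the $(1-x_ix_j)$ divisibility together with the careful sign bookkeeping in the leading-coefficient comparison; the rest is routine.
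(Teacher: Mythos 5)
Your proof is correct, but it takes a genuinely different route from the paper. The paper proceeds by direct computation: column operations extract the factor $(1-q^{\mu_i+1})$ from each row and rewrite the remaining matrix so that its rows are geometric in $z_i=q^{-(\mu_i+1)}(1+q^{\mu_i+1})^2$; the resulting Vandermonde determinant $\prod_{i<j}(z_j-z_i)$ is then expanded, with each difference $z_j-z_i$ factoring as $-q^{-(\mu_i+1)}[\mu_i-\mu_j][\mu_i+\mu_j+2]$, and the powers of $q$ and signs recombine to give the stated prefactor. You instead lift the statement to the polynomial identity $\det\lb x_i^{j-1}-x_i^{2m-j}\rb=\prod_i(1-x_i)\prod_{i<j}(x_j-x_i)(1-x_ix_j)$ in independent indeterminates --- essentially the Weyl denominator formula for type $C_m$ --- and prove it by the standard identification-of-factors method: vanishing row for $1-x_i$, equal rows for $x_j-x_i$, proportional rows under $x_j=1/x_i$ for $1-x_ix_j$, followed by a degree count and a leading-coefficient comparison; the specialization $x_i=q^{\mu_i+1}$ then produces exactly the claimed power of $q$ from $\prod_{i<j}q^{\mu_j+1}=q^{\sum_i(i-1)(\mu_i+1)}$. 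All of your sign and degree bookkeeping checks out (in particular the nontrivial point that the top homogeneous part of the determinant is $(-1)^m\det(x_i^{2m-j})$, since every other term in the multilinear expansion has strictly smaller degree). What each approach buys: the paper's calculation is short and self-contained once one knows the right column operations, with the factors emerging automatically; your version makes the origin of each factor conceptually transparent, requires no cleverness in choosing operations, and establishes the identity over any coefficient ring with the $\mu_i$ playing no role until the harmless final substitution (where $\mu_i>\mu_j$ for $i<j$ is only used to read $x_j-x_i$ as $q^{\mu_j+1}[\mu_i-\mu_j]$).
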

 \begin{proof} Using elementary row operations,  $\vert q^{(j-1)(\mu_i+1)}-q^{(2m-j)(\mu_i+1)}\vert_{1 \leq i,j \leq m}$  equals
 \begin{eqnarray*} && (-1)^{m(m-1)/2} \prod_{i=1}^m (1-q^{\mu_i+1})\vert q^{(m-j)(\mu_i+1)} (1+q^{\mu_i+1})^{2j-2} \vert \cr
 &=& (-1)^{m(m-1)/2} \prod_{i=1}^m (1-q^{\mu_i+1}) q^{(m-1)(\mu_i+1)} \vert q^{-(j-1)(\mu_i+1)}(1+q^{\mu_i+1})^{2j-2} \vert \cr
 &=&(-1)^{m(m-1)/2}q^{\sum\limits_{i=1}^m (m-1)(\mu_i+1)} \prod_{i=1}^m (1-q^{\mu_i+1}) \vert(q^{-(\mu_i+1)}(1+q^{\mu_i+1})^2)^{j-1} \vert. \cr \end{eqnarray*}
The Vandermonde determinant $\vert(q^{-(\mu_i+1)}(1+q^{\mu_i+1})^2)^{j-1} \vert$ is equal to
 \begin{eqnarray*} & & \prod_{1 \leq i<j \leq m} (q^{-(\mu_j+1)}(1+q^{\mu_j+1})^2-q^{-(\mu_i+1)}(1+q^{\mu_i+1})^2)\cr
 &=& \prod_{1 \leq i<j \leq m} -q^{-(\mu_i+1)}(1+q^{2\mu_i+2}-q^{\mu_i-\mu_j}-q^{\mu_i+\mu_j+2})\cr
 &=& (-1)^{\frac{m(m-1)}{2}}q^{-\sum\limits_{i=1}^m (m-i)(\mu_i+1)}\prod_{1 \leq i <j \leq m}  (1-q^{\mu_i-\mu_j})(1-q^{\mu_i+\mu_j+2}) \cr
 &=& (-1)^{\frac{m(m-1)}{2}} q^{-\sum\limits_{i=1}^m(m-i)(\mu_i+1)} \prod_{1 \leq i<j \leq m}[\mu_i-\mu_j][\mu_i+\mu_j+2]. \cr \end{eqnarray*}
Substituting, we obtain the result.  \end{proof}

 \begin{cor}\label{secondqlem}We have $$\vert q^{(j-1)(m-i+1)}-q^{(2m-j)(m-i+1)}\vert_{1 \leq i,j \leq m}=q^{\sum\limits_{i=1}^m (i-1)(m-i+1)}\prod_{i=1}^m [2i-1]!$$

 \end{cor}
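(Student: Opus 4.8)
The plan is to recognize Corollary \ref{secondqlem} as the special case $\lambda = \emptyset$ of Lemma \ref{firstqlem}. Setting all $\lambda_i = 0$ (consistent with the convention $\lambda_i = 0$ for $i > \ell(\lambda)$) gives $\mu_i = \lambda_i + m - i = m - i$, so $\mu_i + 1 = m - i + 1$, and the entry $q^{(j-1)(\mu_i+1)} - q^{(2m-j)(\mu_i+1)}$ of the determinant in the lemma becomes exactly the entry $q^{(j-1)(m-i+1)} - q^{(2m-j)(m-i+1)}$ appearing in the corollary. Thus I would invoke Lemma \ref{firstqlem} verbatim with $\mu_i = m-i$, which immediately yields the claimed power of $q$, namely $\sum_{i=1}^m (i-1)(m-i+1)$, together with the product
$$\prod_{i=1}^m [m-i+1] \prod_{1 \leq i < j \leq m}[\mu_i - \mu_j][\mu_i + \mu_j + 2].$$

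Next I would simplify the three products using $\mu_i - \mu_j = j - i$ and $\mu_i + \mu_j + 2 = 2m - i - j + 2$ for $i < j$, and $\prod_{i=1}^m [m-i+1] = [m]!$. This reduces the entire corollary to the single combinatorial identity
$$[m]! \prod_{1 \leq i < j \leq m}[j-i] \prod_{1 \leq i < j \leq m}[2m-i-j+2] = \prod_{i=1}^m [2i-1]!.$$
To put the left side in a tractable form, I would make two reindexings. First, substituting $i \mapsto m+1-i$ and $j \mapsto m+1-j$ sends the set of unordered pairs to itself and turns $2m-i-j+2$ into $i+j$, so $\prod_{i<j}[2m-i-j+2] = \prod_{i<j}[i+j]$. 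Second, since the factor $[d]$ occurs once for each of the $m-d$ pairs with $j-i = d$, one has $\prod_{1 \leq i < j \leq m}[j-i] = \prod_{i=1}^{m-1}[i]!$, which absorbs $[m]!$ to give the clean reformulation
$$\prod_{i=1}^m [i]! \cdot \prod_{1 \leq i < j \leq m}[i+j] = \prod_{i=1}^m [2i-1]!.$$

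I would finish this last identity by induction on $m$, with the trivial base case $m=1$. Passing from $m-1$ to $m$, the first product gains a factor $[m]!$, while the pairs newly involving $m$ contribute $\prod_{i=1}^{m-1}[i+m] = [m+1][m+2]\cdots[2m-1] = [2m-1]!/[m]!$; the two new factors multiply the left side by exactly $[2m-1]!$, which matches the ratio $\prod_{i=1}^m[2i-1]! \big/ \prod_{i=1}^{m-1}[2i-1]!$ on the right. Since the base case and the ratios agree, the identity follows.

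The content of the argument is essentially the already-proved Lemma \ref{firstqlem}, so there is no genuine obstacle; the only thing requiring care is bookkeeping. Specifically, one must get the reflection $i \mapsto m+1-i$ correct so that $2m-i-j+2 \mapsto i+j$, and confirm that the telescoping in the induction step produces precisely $[2m-1]!$ and not an off-by-one shifted factorial. Everything else is a direct specialization.
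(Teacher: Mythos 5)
Your proposal is correct and follows essentially the same route as the paper: specialize Lemma \ref{firstqlem} at $\lambda=\emptyset$ so that $\mu_i = m-i$, then simplify $\prod_{i=1}^m[m-i+1]\prod_{1\leq i<j\leq m}[j-i][2m-i-j+2]$ to $\prod_{i=1}^m[2i-1]!$. The only difference is that the paper asserts this last product identity without proof, whereas you verify it carefully via the reflection $(i,j)\mapsto(m+1-j,m+1-i)$ and induction on $m$ --- a worthwhile addition, and your bookkeeping checks out.
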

 \begin{proof}  This follows from Lemma \ref{firstqlem} by taking $\lambda=\emptyset$.  We have 
 \begin{eqnarray*} \vert q^{(j-1)(m-i+1)}-q^{(2m-j)(m-i+1)} \vert &=&q^{\sum\limits_{i=1}^m (i-1)(m-i+1)} \prod_{i=1}^m[m-i+1]\prod_{1 \leq i <j \leq m} [j-i][2m-j-i+2] \cr
 &=& q^{\sum\limits_{i=1}^m (i-1)(m-i+1)} \prod_{i=1}^m [2i-1]! \cr \end{eqnarray*}\end{proof}

 Define $f_{sp}^\lambda(q)$ to be the $q$-analogue of the symplectic hook-length formula:
 $$f_{sp}^\lambda(q)=\prod\limits_{(i,j) \in [\lambda]} \frac{[2m+r_{\lambda}(i,j)]}{[h(i,j)]}.$$ 

\begin{thm} \label{hookthm} Let $\lambda$ be a partition and let $\kappa(\lambda)=\sum (i-1)\lambda_i$.  Then
$$f_{sp}^\lambda(q)=q^{-\kappa(\lambda)}\sum_{T \in SP(\lambda,2m)}q^{pwr(T)}.$$ \end{thm}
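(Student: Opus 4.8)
The plan is to evaluate the determinant quotient in (\ref{spdeterm}) explicitly using the two factorizations already in hand, and then reconcile the resulting rational expression with the hook--content product defining $f_{sp}^\lambda(q)$. First I would set $\mu_i=\lambda_i+m-i$ and substitute Lemma \ref{firstqlem} into the numerator of (\ref{spdeterm}) and Corollary \ref{secondqlem} into the denominator. The two $q$-power prefactors combine into $q^{\sum_i (i-1)[(\mu_i+1)-(m-i+1)]}$, and since $(\mu_i+1)-(m-i+1)=\lambda_i$, this prefactor is exactly $q^{\kappa(\lambda)}$. Cancelling it yields the intermediate identity
$$q^{-\kappa(\lambda)}\sum_{T\in SP(\lambda,2m)} q^{pwr(T)}=\frac{\prod_{i=1}^m [\mu_i+1]\,\prod_{1\le i<j\le m}[\mu_i-\mu_j][\mu_i+\mu_j+2]}{\prod_{i=1}^m[2i-1]!}.$$
It then remains to show that the right-hand side equals $f_{sp}^\lambda(q)=\prod_{(i,j)\in[\lambda]}[2m+r_\lambda(i,j)]/[h(i,j)]$.

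Next I would handle the hook denominator and the content numerator of $f_{sp}^\lambda(q)$ separately. For the hooks I would invoke the classical description of the hook lengths in row $i$ as the set $\{1,2,\ldots,\mu_i\}\setminus\{\mu_i-\mu_j : j>i\}$; since $[k]=1-q^k$ is determined by $k$, this gives at once the $q$-analogue $\prod_{(i,j)}[h(i,j)]=\prod_{i=1}^m[\mu_i]!\big/\prod_{1\le i<j\le m}[\mu_i-\mu_j]$. Dividing the target identity by this, and using $[\mu_i+1][\mu_i]!=[\mu_i+1]!$, reduces everything to the content identity
$$\prod_{(i,j)\in[\lambda]}[2m+r_\lambda(i,j)]=\frac{\prod_{i=1}^m[\mu_i+1]!\,\prod_{1\le i<j\le m}[\mu_i+\mu_j+2]}{\prod_{i=1}^m[2i-1]!}.$$

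I expect the main obstacle to be this content identity, which I would prove by comparing the multisets of integer arguments $k$ appearing as factors $[k]$ on each side (enough, since $[k]$ depends only on $k$). The crucial simplification is that for a cell $(i,j)$ strictly below the diagonal one has $2m+r_\lambda(i,j)=2m+\lambda_i+\lambda_j-i-j+2=\mu_i+\mu_j+2$ directly; these cells contribute precisely the factors $[\mu_i+\mu_j+2]$ for the pairs with $\lambda_i\ge j$. The remaining pairs (those with $\lambda_i<j$), together with the staircase factors $\prod_i[2i-1]!$ and the on/above-diagonal contents $2m+i+j-\lambda_i^t-\lambda_j^t$, must then be shown to fill out the ranges $\{1,\ldots,\mu_i+1\}$ and to supply the missing factors $[\mu_i+\mu_j+2]$.

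The delicate part is the bookkeeping for the on/above-diagonal cells, whose contents involve the conjugate parts $\lambda_i^t$ rather than the $\mu_i$, so that the matching with $C=\biguplus_i\{1,\ldots,\mu_i+1\}$ and the ``missing'' pairs is not immediate. This runs in parallel to the content half of the hook--content formula proved in \cite{campbellstokke}: I would adapt that combinatorial argument, the only substantive change being that the bracket $\langle k\rangle=q^k-q^{-k}$ used there is replaced by $[k]=1-q^k$, which affects neither the multiset of arguments nor the matching. Assembling the hook identity and the content identity then gives $f_{sp}^\lambda(q)=q^{-\kappa(\lambda)}\sum_{T\in SP(\lambda,2m)}q^{pwr(T)}$, as claimed.
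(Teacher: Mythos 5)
Your proposal is correct and takes essentially the same route as the paper's proof: evaluate the quotient (\ref{spdeterm}) via Lemma \ref{firstqlem} and Corollary \ref{secondqlem}, identify the combined $q$-prefactor as $q^{\kappa(\lambda)}$, and finish with the $q$-hook-length identity $\prod_{(i,j)}[h(i,j)]=\prod_i[\mu_i]!/\prod_{i<j}[\mu_i-\mu_j]$ together with the content identity $\prod_{(i,j)}[2m+r_\lambda(i,j)]=\prod_i\bigl([\mu_i+1]!/[2i-1]!\bigr)\prod_{i<j}[\mu_i+\mu_j+2]$, which the paper obtains by citing \cite[7.101]{stanleybook} and by noting that the proof of \cite[Lemma 4.3]{campbellstokke} carries over with $\langle k\rangle$ replaced by $[k]$ --- precisely the adaptation you describe. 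The only difference is one of exposition: you sketch the multiset bookkeeping behind the two product identities, whereas the paper handles both by citation.
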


\begin{proof}
By \cite[7.101]{stanleybook} $\ds \prod\limits_{(i,j) \in [\lambda]} [h_{\lambda}(i,j)]=\frac{\prod\limits_{i=1}^m [\mu_i]!}{\prod\limits_{1 \leq i<j \leq m}[\mu_i-\mu_j]}$.   The proof of \cite[Lemma 4.3]{campbellstokke} yields $$\prod\limits_{(i,j)\in [\lambda]} [2m+r_{\lambda}(i,j)]=\prod\limits_{i=1}^m\frac{[\mu_i+1]!}{[2i-1]!}\prod\limits_{1\leq i<j \leq m} {[\mu_i+\mu_j+2]}.$$  Since $\sum_{i=1}^m(i-1)(\mu_i+1)-\sum_{i=1}^m (i-1)(m-i+1)=\kappa(\lambda)$,
\begin{eqnarray*}\sum_{T \in SP(\lambda,2m)}q^{pwr(T)} &=& q^{\kappa(\lambda)}\frac{\prod_{i=1}^m [\mu_i+1]\prod\limits_{1 \leq i<j \leq m}[\mu_i-\mu_j][\mu_i+\mu_j+2]}{\prod\limits_{i=1}^m [2i-1]!}\cr
&=& q^{\kappa(\lambda)}\frac{\prod_{i=1}^m [\mu_i+1]!\prod\limits_{1 \leq i<j \leq m}[\mu_i-\mu_j][\mu_i+\mu_j+2]}{\prod\limits_{i=1}^m[\mu_i]!\prod\limits_{i=1}^m [2i-1]!}\cr
&=& q^{\kappa(\lambda)} \prod_{(i,j) \in [\lambda]} \frac{[2m+r_\lambda(i,j)]}{[h(i,j)]}=q^{\kappa(\lambda)}f_{sp}^\lambda(q).
\end{eqnarray*} 
\end{proof}

\section{A cyclic sieving phenomenon for symplectic tableaux}\label{sec:main}

We will prove the results in this section for the set $SP(\lambda/\mu,2m)$, with a cyclic group action that shifts weights cyclically as in Lemma \ref{wtshiftlem}.  As a corollary, we obtain a CSP for $SP(\lambda,2m)$ with action induced by the $U_q(\mathfrak{sp}(2m))$-crystal structure. Our proofs rely on properties of symplectic weights.  It follows from (\ref{wtsum}) that if $\vert \lambda/\mu\vert =n$ and $T \in SP(\lambda/\mu,\chi)$, then $\sum_{i=1}^m \chi_i=n-2\ell$ for some $0 \leq \ell \leq n$.  Thus, if $n$ is odd, $\sum_{i=1}^m \chi_i$ is odd, which is a fact we will refer to in our proofs.

\begin{thm}\label{orderthm} Let $\vert \lambda/\mu \vert=n$, where $n$ is odd, and suppose that  $gcd(m,p)=1$ for any odd prime $p$ with $p\leq n$.  Let $G=\langle g \rangle$ be a cyclic group of order $2m$ with the property that $wt(g T)=(-\chi_m,\chi_1, \ldots,\chi_{m-1})$ for all $T \in SP(\lambda/\mu,\chi)$.  Then every orbit in $SP(\lambda/\mu,2m)$ under the action of $G$ has cardinality $2m$.
\end{thm}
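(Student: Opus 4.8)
The plan is to apply the orbit--stabilizer theorem and rule out short orbits using \emph{only} the weight of $T$ together with the box count $n$. Since $\vert G \vert = 2m$, every orbit size divides $2m$. Suppose toward a contradiction that some orbit has size $o$ with $o \mid 2m$ and $o < 2m$ (so $o \le m$), and pick $T$ in this orbit. Its stabilizer is $\langle g^o\rangle$, so $g^o T = T$ and in particular $\mathrm{wt}(g^o T) = \mathrm{wt}(T)$. By Lemma~\ref{wtshiftlem} the generator shifts weights by $\chi \mapsto (-\chi_m,\chi_1,\dots,\chi_{m-1})$; I would encode this by extending $\chi=(\chi_1,\dots,\chi_m)$ to a sequence on $\mathbb{Z}$ via the antiperiodicity $\chi_{i+m}=-\chi_i$, so that one application of $g$ becomes the plain shift $i \mapsto i-1$. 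Then $g^oT=T$ forces the additional periodicity $\chi_{i+o}=\chi_i$ for all $i$, which must coexist with the antiperiod $m$.

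The only arithmetic fact about $T$ I would invoke is that $\sum_{i=1}^m\chi_i$ is odd, observed just before the theorem statement since $n$ is odd. I then split on whether $o \mid m$. If $o \mid m$, antiperiodicity gives $\sum_{i=1}^{2m}\chi_i = \sum_{i=1}^m\chi_i - \sum_{i=1}^m\chi_i = 0$, while period $o$ makes the full sum equal to $(2m/o)$ times the sum over a single period; hence that period sum vanishes and $\sum_{i=1}^m\chi_i = (m/o)\cdot 0 = 0$, contradicting oddness.

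The remaining case $o \nmid m$ is the crux. Here $o$ must be even, since an odd divisor of $2m$ already divides $m$; writing $o=2o'$ gives $o' \mid m$ and $k := m/o'$ odd with $k \ge 3$ (as $o<2m$). Combining period $o=2o'$ with the antiperiod $m=o'k$ and $k$ odd yields the finer relation $\chi_{i+o'}=-\chi_i$. Because $k$ is odd, the alternating sums of the $k$ consecutive blocks of length $o'$ collapse to give $\sum_{i=1}^m\chi_i = \sum_{i=1}^{o'}\chi_i =: P$, so $P$ is odd and $\vert P\vert \ge 1$. Using $\vert\chi_i\vert = \vert a_i - a_{\overline{i}}\vert \le a_i + a_{\overline{i}}$ and the same blocking, $n = \sum_{i=1}^m (a_i+a_{\overline{i}}) \ge \sum_{i=1}^m \vert\chi_i\vert = k\sum_{i=1}^{o'}\vert\chi_i\vert \ge k\vert P\vert \ge k$. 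On the other hand $k$ is an odd divisor of $m$ with $k \ge 3$, so it has an odd prime factor $p \mid m$; the hypothesis $\gcd(m,p)=1$ for odd primes $p \le n$ forces $p > n$, whence $k \ge p > n$. This contradicts $n \ge k$ and finishes the argument.

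The routine parts are the orbit--stabilizer reduction and the case $o \mid m$. The main obstacle is the case $o \nmid m$, where I expect the delicate points to be (i) extracting the sharper antiperiodicity $\chi_{i+o'}=-\chi_i$ from the coexistence of period $o$ and antiperiod $m$, and (ii) pitting the lower bound $n \ge k$, coming from $\vert a_i - a_{\overline{i}}\vert \le a_i + a_{\overline{i}}$, against the upper bound $n < k$ forced by the gcd hypothesis via an odd prime factor of $m$. It is worth stressing that the proof never uses the intricate KN-tableau conditions themselves, only $\mathrm{wt}(T)$ and the number of boxes.
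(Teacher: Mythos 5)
Your proof is correct and takes essentially the same route as the paper's: both arguments split on whether the orbit size $o$ divides $m$, use the oddness of $\sum_{i=1}^m \chi_i$ to kill the case $o \mid m$, and in the case $o \nmid m$ extract the antiperiodic block structure of the weight (the paper's $wt(T)=(\chi_1,\dots,\chi_{a},-\chi_1,\dots,-\chi_{a},\dots)$ is exactly your $\chi_{i+o'}=-\chi_i$), which forces $n \geq k$ for an odd divisor $k\geq 3$ of $m$ whose odd prime factor then contradicts the gcd hypothesis. Your antiperiodic-extension bookkeeping and summation bounds are a cleaner write-up than the paper's pointwise index chains and its redundant opening dichotomy on whether $m$ is a power of $2$, but the underlying mathematics is identical.
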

\begin{proof}
Let  $T \in SP(\lambda/\mu, \chi)$.  Since $gcd(m,p)=1$ for odd primes $p \leq n$, either $m=2^k$ for some positive integer $k$ or $m>n$ and $m$ is not divisible by any odd prime $p\leq n$.  

Suppose that the orbit of $T$ has fewer than $2m$ elements.
  Then, for the case $m=2^k$, $g^a(T)=T$, for some $a=2^j$ with $0 \leq j \leq k$.  Since $\mbox{wt}(T)=\mbox{wt}(g^a T)$,
  $$(\chi_1,\chi_2, \ldots, \chi_m)=(-\chi_{m-a+1}, -\chi_{m-a+2}, \ldots, -\chi_m, \chi_1, \cdots, \chi_{m-a}).$$
  Since $a$ divides $m$, $\chi_i=\chi_{i+a}=\chi_{i+2a}=\cdots=\chi_{m-a+i}=-\chi_i$ for $1 \leq i \leq m$ and, in the  case where $a=m$, we have $(-\chi_1,-\chi_2,\ldots,-\chi_m)=(\chi_1,\chi_2,\ldots,\chi_m)$, again yielding $\chi_i=-\chi_i$.  But then $\chi_i=0$ for $1 \leq i \leq m$, which is not possible.

Now suppose  $m \neq 2^k$ and that $gcd(m,p)=1$ for any odd prime $p$, with $p\leq n$. If $g^a(T)=T$, where $a$ divides $m$, the argument is the same as above.  The other possibility is that $g^{2a}(T)=T$, where $a$ divides $m$, $1 \leq a <m$ and $2a$ does not divide $m$.  Then $m=ba$, where $b=2k+1$ is odd. Thus $m=2ak +a \equiv a~\mbox{mod} ~ 2a$ and
$$(\chi_1,\chi_2,\ldots,\chi_m)=(-\chi_{m-2a+1},\ldots, -\chi_m, \chi_1, \ldots, \chi_{m-2a})=wt(g^{2a}(T)).$$
Then $\chi_i=\chi_{i+2a}=\chi_{i+4a}=\cdots = \chi_{i+2ka}=-\chi_{i+a}=-\chi_{i+3a}=\cdots=\chi_i$, for $1 \leq i \leq a-1$ and $\chi_a=\chi_{3a}=\cdots=-\chi_{2a}=-\chi_{4a}=\cdots=\chi_a$ so $$wt(T)=(\chi_1,\ldots,\chi_a,-\chi_1,\ldots,-\chi_a,\chi_1,\ldots,\chi_a,\ldots).$$ Since $m=ab$, there are a total of $b$ entries in the $m$-tuple that are equal to $\chi_i$ or $-\chi_i$, for each $1 \leq i \leq a$.   Since $\sum_{i=1}^m \chi_i \neq 0$, $\chi_i \neq 0$ for some $1\leq i \leq a$.  Assuming $\chi_i >0$, $T$ contains at least $\chi_i$ entries equal to $i$, at least $\chi_i$ entries equal to $\ov{i+a}$, et cetera.  But then $T$ has at least $b$ entries so $n \geq b$.   Since $b$ is odd  and no odd prime less than $n$ divides $m$ this is impossible.  \end{proof}

Lemmas \ref{orderlem} and \ref{wtshiftlem} yield the following corollary.

\begin{cor}\label{ordercor} Let $\lambda$ be a partition of $n$, where $n$ is odd, and suppose that  $gcd(m,p)=1$ for any odd prime $p$ with $p\leq n$.  Then every orbit in $SP(\lambda,2m)$ under the action of $G=\langle \sigma \rangle$ has cardinality $2m$.
\end{cor}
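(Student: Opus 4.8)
The plan is to recognize this as an immediate specialization of Theorem \ref{orderthm}, with the two cited lemmas serving only to verify that the concrete operator $\sigma$ is a legitimate instance of the abstract generator $g$ appearing in that theorem. Concretely, I would take $\mu=\emptyset$, so that $SP(\lambda/\mu,2m)=SP(\lambda,2m)$ and $\vert \lambda/\mu\vert=\vert \lambda\vert=n$. The two standing hypotheses of Theorem \ref{orderthm} that $n$ is odd and that $\mathrm{gcd}(m,p)=1$ for every odd prime $p\leq n$ are then inherited verbatim from the hypotheses of the corollary, so nothing needs to be re-checked there.

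What remains is to confirm that $g=\sigma$ meets the two structural requirements imposed on the group in Theorem \ref{orderthm}. First, the group must be cyclic of order $2m$ acting on the tableau set; this is exactly the content of Lemma \ref{orderlem}, which identifies $G=\langle\sigma\rangle$ as a cyclic group of order $2m$ (the Coxeter number of the type $C_m$ Weyl group) acting on $SP(\lambda,2m)$. Second, the generator must send a weight $(\chi_1,\ldots,\chi_m)$ to $(-\chi_m,\chi_1,\ldots,\chi_{m-1})$; this is precisely Lemma \ref{wtshiftlem}, which records that $\mathrm{wt}(\sigma T)=(-\chi_m,\chi_1,\ldots,\chi_{m-1})$ for every $T\in SP(\lambda,\chi)$. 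With both requirements met, Theorem \ref{orderthm} applies directly and yields that every $\langle\sigma\rangle$-orbit in $SP(\lambda,2m)$ has cardinality $2m$.

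Since the substantive combinatorial and number-theoretic work—the case split on whether $m$ is a power of $2$, and the argument that a putative short orbit would force a forbidden odd prime divisor of $m$ or a zero weight—has already been carried out inside the proof of Theorem \ref{orderthm}, there is no genuine obstacle here. The only point warranting a moment's care is matching the weight-shift convention: one must observe that the sign and cyclic-shift pattern produced by $\sigma$ (via Lemma \ref{wtshiftlem}) is identically the pattern $(\chi_1,\ldots,\chi_m)\mapsto(-\chi_m,\chi_1,\ldots,\chi_{m-1})$ hypothesized for $g$, with no reindexing discrepancy. Once that identification is stated, the corollary follows in a single line as the $\mu=\emptyset$, $g=\sigma$ instance of the theorem.
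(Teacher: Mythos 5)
Your proposal is correct and matches the paper's own argument exactly: the paper derives Corollary \ref{ordercor} from Theorem \ref{orderthm} by invoking Lemma \ref{orderlem} (that $\langle\sigma\rangle$ is cyclic of order $2m$ acting on $SP(\lambda,2m)$) and Lemma \ref{wtshiftlem} (the weight-shift property of $\sigma$), with $\mu=\emptyset$. No further comment is needed.
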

\begin{rem} If $\lambda$ is a partition of an even number then $SP(\lambda,2m)$ may have single-element orbits under the action of $\langle \sigma \rangle$.  As well, if $\lambda$ is a partition of an odd number $n$ and $m$ is divisible by some prime $p \leq n$, then $SP(\lambda,2m)$ may have orbits with fewer than $2m$ elements  (see Example \ref{orbiteg}). It is also worth pointing out that, while our hypotheses guarantee that $\mbox{gcd}(m,n)=1$, this is not sufficient.  For example, if $\lambda=(4,1)$ and $m=6$, there are orbits with fewer than $12$ elements.
\end{rem}

For $T \in SP(\lambda/\mu,2m)$ define $pwr(T)$ as in (\ref{eqpwr}) and define
$$\displaystyle X(q)=\sum_{T \in SP(\lambda/\mu,2m)} q^{pwr(T)}=\sum_{\chi \in wt(SP(\lambda/\mu))} \vert SP(\lambda/\mu,\chi)\vert q^{pwr(\chi)}.$$
A set of integers $S$ is a {\em complete residue system} modulo a positive integer $n$ if $\vert S \vert =n$ and no two elements in $S$ are congruent modulo $n$.  In order to prove a CSP for $SP(\lambda/\mu,2m)$, using $X(q)$ as a CSP polynomial, we aim to partition the set of weights $wt(SP(\lambda/\mu))$ into sets $\mathcal{A}_\chi$ of cardinality $2m$ such that the powers in the polynomial $X(q)$ associated to each $\mathcal{A}_\chi$ form a complete residue system modulo $2m$.   The symplectic version of \cite[Lemma 3.2]{ohpark1} does not hold.  Instead, we will work with an action of signed permutations associated with the dihedral group of order $2m$ on $wt(SP(\lambda/\mu))$  to divide the powers into sets that each form complete residue systems modulo $2m$.

The symmetric group $\mathfrak{S}_m$ acts on $wt(SP(\lambda/\mu))$ by $\theta \chi=(\chi_{\theta^{-1}(1)}, \ldots, \chi_{\theta^{-1}(m)})$, and \begin{equation}\label{symact} \vert SP(\lambda/\mu,\chi) \vert = \vert SP(\lambda/\mu, \theta \chi)\vert=\vert SP(\lambda/\mu,-\theta \chi) \vert,  \ \theta \in \mathfrak{S}_m, \ \chi \in wt(SP(\lambda/\mu)).\end{equation}   For $\mu=\emptyset$ this is well-known, since $\vert SP(\lambda,\chi)\vert$  is equal to the dimension of the corresponding weight space for the irreducible $\mathfrak{sp}(2m)$-representation with highest weight $\lambda$. For skew tableaux, this can be seen using the bijections (\ref{sigmai}), and their impact on weights (\ref{eqn1}), and a symplectic version of  the argument in \cite[(3.1)]{alexander}.

\begin{lem}\label{reslem} Let $\vert \lambda/\mu\vert =n,$ where $n$ is odd, and suppose that $gcd(m,p)=1$ for any odd prime $p\leq n$.    Let $D_{2m}=\langle \gamma=(1,2,\ldots,m), \beta=(2,m)(3,m-1)\cdots \rangle$.  If $\chi \in wt(SP(\lambda/\mu))$, then the set
$\{pwr(\gamma^t\chi), pwr(-\gamma^t\beta\chi) \mid 0 \leq t \leq m-1\}$ is a complete residue system modulo $2m$.\end{lem}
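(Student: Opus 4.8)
The plan is to reduce the whole statement to a single divisibility condition, $\gcd(S,m)=1$, where $S:=\sum_{i=1}^m\chi_i$, by exploiting that Lemma~\ref{spspec} makes $pwr$ a function of the weight alone. Write $Q_s:=\chi_1+\cdots+\chi_s$ (so $Q_0=0$, $Q_m=S$). The first step is to record how $pwr$ transforms along the two families of weights. Since $\gamma=(1,2,\ldots,m)$ acts by the cyclic shift $(\gamma\eta)_1=\eta_m$, $(\gamma\eta)_i=\eta_{i-1}$, and a permutation fixes the weight-sum (so in Lemma~\ref{spspec} only the term $\sum_i(i-1)\eta_i$ moves), a direct computation gives
$$pwr(\gamma^t\chi)=pwr(\chi)+(t-m)S+mQ_{m-t},\qquad 0\le t\le m-1.$$
A similar but longer computation, now also using that negation sends the weight-sum to $-S$ (so the second term of Lemma~\ref{spspec} uses $n+S$ in place of $n-S$) and that $\beta$ reverses $\chi_2,\ldots,\chi_m$ while fixing $\chi_1$, yields
$$pwr(-\gamma^t\beta\chi)=pwr(\chi)+(m-1-t)S+mQ_{t+1},\qquad 0\le t\le m-1.$$
I expect establishing these two transformation formulas to be the main computational hurdle; everything afterward is clean number theory.

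Reindexing by $u=m-t$ in the first family and $u=t+1$ in the second turns the $2m$ exponents into $A_u:=pwr(\chi)-uS+mQ_u$ and $B_u:=pwr(\chi)+(m-u)S+mQ_u$ for $u=1,\ldots,m$. The crucial observation is that the two families share the index $u$ in their $mQ_u$ terms, so these cancel in the difference and $B_u-A_u=mS$. Since $n$ is odd, $S\equiv n\pmod 2$ is odd (as noted just before Theorem~\ref{orderthm}), hence $mS\equiv m\pmod{2m}$. Thus, modulo $2m$, the reflection exponents are exactly the rotation exponents shifted by $m$, and the $2m$-element multiset in question is $\{A_u,\,A_u+m:1\le u\le m\}$.

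It then remains to show this multiset is a complete residue system modulo $2m$. The reduction $\mathbb{Z}/2m\to\mathbb{Z}/m$ is two-to-one with fibers $\{c,c+m\}$, and each pair $\{A_u,A_u+m\}$ is precisely the fiber over $A_u\bmod m$; therefore the multiset is a complete residue system modulo $2m$ if and only if $\{A_u\bmod m:1\le u\le m\}$ is a complete residue system modulo $m$. Reducing modulo $m$ annihilates the $mQ_u$ term, leaving $A_u\equiv pwr(\chi)-uS\pmod m$, so the condition is exactly that $u\mapsto -uS$ is a bijection of $\mathbb{Z}/m$, i.e. $\gcd(S,m)=1$. Finally I would verify this gcd condition from the hypotheses: because $S=n-2\ell$ for some $0\le\ell\le n$, we have $|S|\le n$ and, $S$ being odd, $S\neq 0$; hence every prime divisor of $S$ is an odd prime $p\le n$, and $\gcd(m,p)=1$ for all such $p$ by assumption, giving $\gcd(S,m)=1$. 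The roles of the two hypotheses are now transparent: $n$ odd forces $S$ odd, which both produces the shift-by-$m$ in the second paragraph and guarantees that $S$ has only odd prime factors bounded by $n$ in the final step.
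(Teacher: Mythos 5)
Your proof is correct, and I checked both transformation formulas against the paper: your $pwr(\gamma^t\chi)=pwr(\chi)+(t-m)S+mQ_{m-t}$ agrees with the paper's coefficient formula $\sum_i((i-1+t)\bmod m)\chi_i$, and the paper's key difference $pwr(\chi)-pwr(-\gamma^t\beta\chi)=(t+1)S-2mS+m\sum_{i\geq t+2}\chi_i$ is exactly your $(t+1-m)S-mQ_{t+1}$; your derivation of $\gcd(S,m)=1$ from the hypotheses is also the same as the paper's. Where you genuinely diverge is in how pairwise distinctness modulo $2m$ is organized. The paper makes three separate checks: the rotation exponents form a complete residue system mod $m$ (hence are distinct mod $2m$); the reflection exponents likewise; and no rotation exponent is congruent to a reflection exponent mod $2m$ --- this last via a case analysis (if $2m$ divides $pwr(\chi)-pwr(-\gamma^t\beta\chi)$ then $m\mid(t+1)S$, forcing $t+1=m$, whereupon the difference is $-mS$, not divisible by $2m$ since $S$ is odd), together with an implicit appeal to the dihedral relation $\beta\gamma^{-t_1}=\gamma^{t_1}\beta$ to reduce an arbitrary cross pair $(t_1,t_2)$ to one based at $\chi$. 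Your pairing observation $B_u-A_u=mS\equiv m\pmod{2m}$ collapses all three checks into one: the $2m$ exponents form a union of $m$ fibers of the two-to-one reduction $\mathbb{Z}/2m\to\mathbb{Z}/m$, so the whole lemma reduces to the single statement that $\{A_u\bmod m : 1\leq u\leq m\}$ is a complete residue system mod $m$, i.e.\ to $\gcd(S,m)=1$. What your route buys is that the rotation-versus-reflection comparison --- the fiddliest and least explicit step in the paper's proof --- becomes automatic; what it costs is that you must compute both families of exponents exactly (not just modulo $m$), which is the main computational burden you correctly identified.
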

\begin{proof}For $0 \leq t \leq m-1$, $\ds pwr(\gamma^t \chi)=\sum_{i=1}^m ((i-1+t)~ \mbox{mod} ~ m) \chi_i + \frac{2m-1}{2}(n-\sum_{i=1}^m \chi_i)$.
Since $gcd(m,p)=1$ for odd primes $p\leq n$ and $\sum_{i=1}^m \chi_i$ is odd with $\sum_{i=1}^m \chi_i=n-2 \ell$, where $0 \leq \ell \leq n$, $gcd(\sum_{i=1}^m \chi_i,m)=1$.  It follows that the set $\{pwr(\gamma^t \chi) \mid 0 \leq t \leq m-1\}$ is a complete residue system modulo $m$, so no two elements in the set $\{pwr(\gamma^t \chi) \mid 0 \leq t \leq m-1\}$ are congruent modulo $2m$.

Also $pwr(-\gamma^t\beta \chi)\equiv pwr(-\gamma^{t+1}\beta \chi)+\sum_{i=1}^m \chi_i \mod m,$
so $\{pwr(-\gamma^t\beta \chi)\mid 0 \leq t \leq m-1\}$ is a complete residue set modulo $m$.  Thus none of the elements in  $\{pwr(-\gamma^t\beta \chi) \mid 0 \leq t \leq m-1\}$ are congruent modulo $2m$.  Lastly, $$pwr(\chi)- pwr(-\gamma^t\beta \chi)=(t+1)\sum_{i=1}^m \chi_i-2m\sum_{i=1}^m \chi_i+m\sum_{i=t+2}^m \chi_i,$$
so if $pwr(\chi)- pwr(-\gamma^t\beta \chi)$ is divisible by $2m$, then $m$ divides $(t+1) \sum_{i=1}^m \chi_i$.  Then, since $\mbox{gcd}(m, \sum_{i=1}^m \chi_i)=1$, $m$ divides $t+1$, which is only possible if $m=t+1$.  However, if $m=t+1$, then $pwr(\chi)- pwr(-\gamma^t\beta \chi)=-m\sum_{i=1}^m \chi_i$ and, since $\sum_{i=1}^m \chi_i$ is odd, this cannot be divisible by $2m$, so $pwr(\chi)\not\equiv pwr(-\gamma^t\beta \chi) \mod 2m$ for $0 \leq t \leq m-1$.  It follows that $pwr(\gamma^{t_1} \chi) \not\equiv  pwr(-\gamma^{t_2}\beta \chi) \mod 2m$, for $t_1 \neq t_2$ so the set in question is a complete residue set modulo $2m$.
\end{proof}

 Given  $\chi \in wt(SP(\lambda/\mu))$, define the subset $\mathcal{A}_\chi$ of $wt(SP(\lambda/\mu))$ as
$\mathcal{A}_{\chi}=\{\gamma^t\chi, -\gamma^t\beta\chi \mid 0 \leq t \leq m-1\}.$
Under the conditions of Lemma \ref{reslem},  $\vert \mathcal{A}_\chi\vert =2m$.   A routine argument shows that whenever $\mathcal{A}_{\chi_1} \cap \mathcal{A}_{\chi_2} \neq \emptyset$, we have $\mathcal{A}_{\chi_1} = \mathcal{A}_{\chi_2}$.  Thus the sets $\mathcal{A}_\chi$ partition $wt(SP(\lambda/\mu))$ into sets of size $2m$.

\begin{exa} (1) Let $\lambda=(2,1)$, $m=4$ and $\chi=(2,-1,0,0)$.  Then
$$\mathcal{A}_\chi = \{(2,-1,0,0), (0,2,-1,0),(0,0,2,-1),(-1,0,0,2),(-2,0,0,1),(1,-2,0,0),(0,1,-2,0),(0,0,1,-2)\},$$ which is
covered by Lemma \ref{reslem} and the corresponding powers form a complete residue set modulo $8$.

\bigskip

\noindent (2) If the hypotheses in Lemma \ref{reslem} are relaxed, the result does not hold.  For example, if $\lambda=(2,1)$, $m=3$ and $\chi=(1,1,1,0,0,0)$ then $\{pwr(\gamma^t\chi), pwr(-\gamma^t\beta\chi) \mid 0 \leq t \leq 2\}$ is not a complete residue set modulo $6$.  Note that, in this case, $f_{sp}^\lambda(q)\equiv 10q^5+11q^4+11q^3+10q^2+11q+11 \mod q^6-1$ and Corollary \ref{maincor} does not hold.

\end{exa}

 \begin{thm} \label{equivthm} Let $\vert \lambda/\mu \vert =n$ where $n$ is odd, and suppose that $gcd(m,p)=1$ for any odd prime $p$ with $p\leq n$. Then $$X(q) \equiv \frac{\vert SP(\lambda/\mu, 2m) \vert}{2m}  \sum_{k=0}^{2m-1} q^k \mod q^{2m}-1.$$
 \end{thm}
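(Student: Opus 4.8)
The plan is to exploit the partition of $wt(SP(\lambda/\mu))$ into the blocks $\mathcal{A}_\chi$ of size $2m$ established just before the statement, and to show that each block contributes a constant multiple of $\sum_{k=0}^{2m-1}q^k$ modulo $q^{2m}-1$. First I would fix a set $R$ of representatives, one from each block, and rewrite
$$X(q)=\sum_{\chi\in R}\ \sum_{\psi\in\mathcal{A}_\chi}\vert SP(\lambda/\mu,\psi)\vert\,q^{pwr(\psi)}.$$

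The key observation is that the weight-space multiplicity is constant on each block. Every element of $\mathcal{A}_\chi$ has the form $\gamma^t\chi$ or $-\gamma^t\beta\chi$ with $\gamma^t,\gamma^t\beta\in\mathfrak{S}_m$, so by (\ref{symact}) we have $\vert SP(\lambda/\mu,\psi)\vert=\vert SP(\lambda/\mu,\chi)\vert=:c_\chi$ for every $\psi\in\mathcal{A}_\chi$. Hence the inner sum becomes $c_\chi\sum_{\psi\in\mathcal{A}_\chi}q^{pwr(\psi)}$. By Lemma \ref{reslem} the exponents $\{pwr(\psi)\mid\psi\in\mathcal{A}_\chi\}$ form a complete residue system modulo $2m$, so working modulo $q^{2m}-1$ each power $q^a$ may be replaced by $q^{a\bmod 2m}$, giving
$$\sum_{\psi\in\mathcal{A}_\chi}q^{pwr(\psi)}\equiv\sum_{k=0}^{2m-1}q^k\pmod{q^{2m}-1}.$$

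Summing over the blocks then yields $X(q)\equiv\bigl(\sum_{\chi\in R}c_\chi\bigr)\sum_{k=0}^{2m-1}q^k$. Finally I would identify the coefficient: since the multiplicity is constant on each block and $\vert\mathcal{A}_\chi\vert=2m$, each block contributes $2m\,c_\chi$ to the total count, so
$$\vert SP(\lambda/\mu,2m)\vert=\sum_{\chi\in R}\sum_{\psi\in\mathcal{A}_\chi}\vert SP(\lambda/\mu,\psi)\vert=2m\sum_{\chi\in R}c_\chi,$$
whence $\sum_{\chi\in R}c_\chi=\vert SP(\lambda/\mu,2m)\vert/(2m)$, which gives the claimed congruence. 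The substantive work has all been done in Lemma \ref{reslem}; the only point requiring genuine care here is verifying that $c_\chi$ really is constant across the \emph{entire} block, which rests on invoking both the $\theta\chi$ and the $-\theta\chi$ cases of (\ref{symact}) for the dihedral elements $\gamma^t$ and $\gamma^t\beta$. Once that is secured the theorem follows by collecting terms, so I do not anticipate any obstacle beyond this bookkeeping.
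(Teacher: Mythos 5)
Your proposal is correct and follows essentially the same route as the paper's proof: the paper likewise fixes a transversal of the blocks $\mathcal{A}_\chi$, uses (\ref{symact}) to see that the multiplicity $\vert SP(\lambda/\mu,\chi)\vert$ is constant on each block, applies Lemma \ref{reslem} to replace each block's contribution by $1+q+\cdots+q^{2m-1}$ modulo $q^{2m}-1$, and identifies the coefficient via $\vert SP(\lambda/\mu,2m)\vert = 2m\sum_{\chi}\vert SP(\lambda/\mu,\chi)\vert$. The bookkeeping you flag as the only delicate point (constancy of $c_\chi$ across the whole block, including the $-\gamma^t\beta\chi$ elements) is handled identically in the paper by the same appeal to both cases of (\ref{symact}).
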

 \begin{proof}   Let $\mathcal{A} \subseteq wt(SP(\lambda/\mu))$ be a transversal for the collection of sets $\mathcal{A}_\chi$.  By (\ref{symact}), $\vert SP(\lambda/\mu, \xi)\vert = \vert SP(\lambda/\mu, \chi)\vert$ for all $\xi \in \mathcal{A}_\chi$.  Then
 $$\ds \vert SP(\lambda/\mu, 2m) \vert = \sum_{\chi\in \mathcal{A}} \vert SP(\lambda/\mu, \chi) \vert \vert A_\chi\vert=2m \sum_{\chi\in \mathcal{A}} \vert SP(\lambda/\mu,\chi) \vert .$$
By Lemma \ref{reslem}, \begin{eqnarray*}X(q)&=&\sum_{\chi\in wt(SP(\lambda/\mu))} \vert SP(\lambda/\mu,\chi) \vert q^{pwr(\chi)} \cr
 &=& \sum_{\chi \in \mathcal{A}} \vert SP(\lambda/\mu,\chi) \vert \sum_{\xi \in \mathcal{A}_\chi} q^{pwr(\xi)} \cr
&\equiv& \sum_{\chi \in \mathcal{A}} \vert SP(\lambda/\mu,\chi) \vert (1 + q + \cdots + q^{{2m}-1})\mod q^{2m} -1 \cr
 &=& \frac{\vert SP(\lambda/\mu,2m) \vert}{2m} (1 + q + \cdots + q^{2m-1})\mod q^{2m} -1.\cr
 \end{eqnarray*}  \end{proof}

 \noindent Combining Theorem \ref{orderthm} and Theorem \ref{equivthm}, we obtain the following CSP.
 \begin{thm}\label{cspthm}  Let $\vert \lambda/\mu \vert=n$, where $n$ is odd, and suppose that  $gcd(m,p)=1$ for any odd prime $p \leq n$.  Let $\langle g \rangle$ be a cyclic group of order $2m$ that acts on $SP(\lambda/\mu,2m)$ with the property that $wt(g T)=(-\chi_m,\chi_1, \ldots,\chi_{m-1})$ for all $T \in SP(\lambda/\mu,\chi)$.    Then $(SP(\lambda/\mu,2m),\langle g \rangle,X(q))$ exhibits the cyclic sieving phenomenon. \end{thm}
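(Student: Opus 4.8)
The plan is to combine the two preceding theorems directly: Theorem \ref{orderthm} controls the left-hand side of the cyclic sieving identity (the fixed-point counts), while Theorem \ref{equivthm} controls the right-hand side (the polynomial evaluated at roots of unity). Fix a primitive $2m$-th root of unity $\omega$. I need to verify that $\vert \{T \in SP(\lambda/\mu,2m) \mid g^d \cdot T = T\}\vert = X(\omega^d)$ for all $d \geq 0$.

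First I would analyze the fixed-point sets. By Theorem \ref{orderthm}, every orbit of $\langle g \rangle$ on $SP(\lambda/\mu,2m)$ has cardinality $2m$, so the stabilizer of each tableau is trivial. Hence $g^d \cdot T = T$ can hold only when $g^d$ is the identity, that is, when $2m \mid d$. Therefore the fixed-point count equals $\vert SP(\lambda/\mu,2m)\vert$ when $2m \mid d$, and equals $0$ otherwise.

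Next I would evaluate $X(\omega^d)$. Since $(\omega^d)^{2m} = 1$, the congruence in Theorem \ref{equivthm} may be evaluated at $\omega^d$: writing $X(q) = \tfrac{\vert SP(\lambda/\mu,2m)\vert}{2m}\sum_{k=0}^{2m-1}q^k + (q^{2m}-1)h(q)$ for some $h(q) \in \mathbb{Z}[q]$, the correction term vanishes at $\omega^d$, so
$$X(\omega^d) = \frac{\vert SP(\lambda/\mu,2m)\vert}{2m}\sum_{k=0}^{2m-1}(\omega^d)^k.$$
When $2m \mid d$ we have $\omega^d = 1$ and the geometric sum equals $2m$, giving $X(\omega^d) = \vert SP(\lambda/\mu,2m)\vert$; when $2m \nmid d$ the sum telescopes as $\sum_{k=0}^{2m-1}(\omega^d)^k = \tfrac{(\omega^d)^{2m}-1}{\omega^d - 1} = 0$, giving $X(\omega^d)=0$.

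Comparing the two computations shows that the fixed-point count agrees with $X(\omega^d)$ in both regimes, which establishes the cyclic sieving phenomenon. There is no genuine obstacle remaining, since the substance has been absorbed into Theorems \ref{orderthm} and \ref{equivthm}; the only points requiring care are the justification that evaluating the mod-$(q^{2m}-1)$ congruence at a $2m$-th root of unity is legitimate (because $q^{2m}-1$ vanishes there), and the bookkeeping of the two cases $2m \mid d$ versus $2m \nmid d$.
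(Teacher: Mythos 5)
Your proposal is correct and matches the paper's approach exactly: the paper proves Theorem \ref{cspthm} in one line by ``combining Theorem \ref{orderthm} and Theorem \ref{equivthm},'' and your argument simply fills in the routine details of that combination (trivial stabilizers from the free action, and evaluation of the congruence mod $q^{2m}-1$ at $2m$-th roots of unity). Nothing is missing.
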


\begin{cor}\label{maincor}Let $\lambda$ be a partition of $n$, where $n$ is odd, and suppose that $gcd(m,p)=1$ for any odd prime $p \leq n$.  Then $(SP(\lambda,2m),\langle \sigma \rangle,f^\lambda_{sp}(q))$ exhibits the cyclic sieving phenomenon, where $\sigma$ is the natural action induced by the $U_q(\mathfrak{sp}(2m))$-crystal and $f^\lambda_{sp}(q)$ is the  $q$-analogue of the symplectic hook-content formula. \end{cor}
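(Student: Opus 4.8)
The plan is to obtain the corollary as the special case $\mu=\emptyset$, $g=\sigma$ of Theorem \ref{cspthm}, and then to upgrade the CSP polynomial from $X(q)$ to $f_{sp}^\lambda(q)$. First I would check the hypotheses of Theorem \ref{cspthm}. By Lemma \ref{orderlem}, the group $G=\langle \sigma \rangle$ has order $2m$ and acts on $SP(\lambda,2m)$; by Lemma \ref{wtshiftlem}, every $T \in SP(\lambda,\chi)$ satisfies $\mbox{wt}(\sigma T)=(-\chi_m,\chi_1,\ldots,\chi_{m-1})$, which is precisely the weight-shift property demanded of the generator $g$. Since $\lambda$ is a partition of an odd $n$ and $\gcd(m,p)=1$ for every odd prime $p\leq n$, all hypotheses hold with $\lambda/\mu=\lambda$, so Theorem \ref{cspthm} already yields that $(SP(\lambda,2m),\langle \sigma \rangle, X(q))$ exhibits the cyclic sieving phenomenon, where $X(q)=\sum_{T\in SP(\lambda,2m)}q^{pwr(T)}$.

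It then remains to replace $X(q)$ by $f_{sp}^\lambda(q)$. By Theorem \ref{hookthm} these differ only by a monomial factor, $X(q)=q^{\kappa(\lambda)}f_{sp}^\lambda(q)$. The step I expect to require the most care is exactly this discrepancy: evaluating at a primitive $2m$-th root of unity $\omega$, the factor $\omega^{d\kappa(\lambda)}$ need not equal $1$, so one cannot simply cancel it and conclude $f_{sp}^\lambda(\omega^d)=X(\omega^d)$ pointwise. I would resolve this by routing the comparison through the explicit congruence of Theorem \ref{equivthm}, which with $\mu=\emptyset$ reads
$$X(q) \equiv \frac{\vert SP(\lambda,2m)\vert}{2m}\sum_{k=0}^{2m-1}q^k \pmod{q^{2m}-1}.$$

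The elementary fact that makes everything go through is that multiplication by any power of $q$ fixes the balanced polynomial $\sum_{k=0}^{2m-1}q^k$ modulo $q^{2m}-1$, since $q\cdot\sum_{k=0}^{2m-1}q^k=\sum_{k=1}^{2m}q^k\equiv\sum_{k=0}^{2m-1}q^k \pmod{q^{2m}-1}$; in particular $q$ is a unit in $\mathbb{Z}[q]/(q^{2m}-1)$, so $q^{\kappa(\lambda)}$ is invertible there. Multiplying the displayed congruence by $q^{-\kappa(\lambda)}$ and using this invariance, I would obtain
$$f_{sp}^\lambda(q)=q^{-\kappa(\lambda)}X(q)\equiv q^{-\kappa(\lambda)}\cdot\frac{\vert SP(\lambda,2m)\vert}{2m}\sum_{k=0}^{2m-1}q^k\equiv\frac{\vert SP(\lambda,2m)\vert}{2m}\sum_{k=0}^{2m-1}q^k\equiv X(q)\pmod{q^{2m}-1}.$$
Hence $f_{sp}^\lambda(\omega^d)=X(\omega^d)$ for every $2m$-th root of unity $\omega^d$. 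Since the number of fixed points of $\sigma^d$ acting on $SP(\lambda,2m)$ equals $X(\omega^d)$ by the CSP established in the first step, it also equals $f_{sp}^\lambda(\omega^d)$, which is exactly the assertion that $(SP(\lambda,2m),\langle\sigma\rangle,f_{sp}^\lambda(q))$ exhibits the cyclic sieving phenomenon.
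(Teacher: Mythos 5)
Your proof is correct and follows essentially the same route the paper intends: specialize Theorem \ref{cspthm} to $\mu=\emptyset$ with $g=\sigma$ (hypotheses supplied by Lemmas \ref{orderlem} and \ref{wtshiftlem}), then pass from $X(q)$ to $f_{sp}^\lambda(q)$ via Theorem \ref{hookthm} together with the congruence of Theorem \ref{equivthm}. The paper states the corollary as immediate and does not spell out the monomial discrepancy $q^{\kappa(\lambda)}$; your observation that $\sum_{k=0}^{2m-1}q^k$ is fixed by multiplication by $q$ in $\mathbb{Z}[q]/(q^{2m}-1)$, so that $f_{sp}^\lambda(q)\equiv X(q) \pmod{q^{2m}-1}$ and the two polynomials agree at all $2m$-th roots of unity, is precisely the justification being left implicit.
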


\noindent {\bf Acknowledgement.}  The authors wish to thank two anonymous referees for suggestions that improved the paper, including a suggestion to generalize the main result to skew shapes.

\end{document}